\newcommand{\C}{\mathbb{C}}
\newcommand{\N}{\mathbb{N}}
\newcommand{\R}{\mathbb{R}}
\newcommand{\cL}{{\mathcal L}}
\newcommand{\Hom}{\operatorname{Hom}}
\renewcommand{\ss}{\mathrm{ss}}
\newcommand{\rad}{\mathrm{rad}}
\newcommand{\Ext}{\mathrm{Ext}}
\newcommand{\modul}{\hbox{-mod}}
\newcommand{\nil}{\mathrm{nil}}
\newcommand{\Id}{\operatorname{Id}}
\newcommand{\Lie}{\operatorname{Lie}}
\newcommand{\id}{\operatorname{id}}
\newcommand{\bbR}{{\mathbb R}}
\newcommand{\gr}{\operatorname{\textrm{\rm gr}}}
\newtheorem{thm}{Theorem}[section]
\newtheorem{theorem}[thm]{Theorem}
\newtheorem{cor}[thm]{Corollary}
\newtheorem{corollary}[thm]{Corollary}
\newtheorem{prop}[thm]{Proposition}
\newtheorem{lemma}[thm]{Lemma}
\newtheorem*{claim*}{Claim}
\numberwithin{equation}{section}
\def\degp{\deg}
\begin{document}

\title[A note on Lie algebra cohomology]{A note on Lie algebra cohomology}

\author{Michael J. Larsen}
\address{Department of Mathematics,
Indiana University,
Bloomington, IN
47405,
U.S.A.}

\author{Valery A. Lunts}
\address{Department of Mathematics,
Indiana University,
Bloomington, IN
47405,
U.S.A.; HSE University, Russian Federation }

\thanks{ML was partially supported by NSF grant DMS-1702152.
VL was partially supported by Laboratory of Mirror Symmetry NRU HSE, RF Government grant, ag. No. 14.641.31.0001}


\begin{abstract}
Given a finite dimensional Lie algebra $L$ let $I$ be the augmentation ideal in the universal enveloping algebra $U(L)$. We study the conditions on $L$ under which the $\Ext$-groups $\Ext (k,k)$ for the trivial $L$-module $k$ are the same when computed in the category of all $U(L)$-modules or in the category of $I$-torsion $U(L)$-modules. We also prove that the Rees algebra $\oplus _{n\geq 0}I^n$ is Noetherian if and only if $L$ is nilpotent. An application to cohomology of equivariant sheaves is given.
\end{abstract}

\maketitle

\section{Introduction}

Let $L$ be a finite dimensional Lie algebra over a field $k$. Consider the universal enveloping algebra $U(L)$ with the augmentation ideal $I\subset U(L)$. Denote by $U(L)\modul$ the category of finitely generated left $U(L)$-modules and by $(U(L)\modul)_I\subset U(L)\modul$ the Serre subcategory of $I$-torsion modules. We have the obvious functor
\begin{equation}\label{functor}
\Phi _L:D^b((U(L)\modul)_I)\to D^b_I(U(L)\modul)
\end{equation}
where $D^b_I(U(L)\modul)\subset D^b(U(L)\modul)$ is the full triangulated subcategory consisting of complexes with $I$-torsion cohomology. In this paper we study the question:

\medskip

\noindent{\bf Question.} When is $\Phi _L$ an equivalence?

\medskip

The functor $\Phi _L$ being an equivalence means that the Ext-groups $\Ext ^i(k,k)$ for the trivial $L$-module $k$ are the same in the categories $U(L)\modul$ and $(U(L)\modul)_I$.

We answer this question in Theorem \ref{thm A} below.

Define inductively the decreasing sequence of ideals in $L$:
$$L_1=L,\quad L_n=[L,L_{n-1}]$$
and put $L_\infty =\bigcap _nL_n$. This is an ideal in $L$ such that the quotient Lie algebra $L_{\nil}:=L/L_\infty $ is nilpotent. We have $L_\infty =0$ if and only if $L$ is nilpotent.

For each $i$ the cohomology $H^i(L_\infty ,k)$ is naturally an $L_{\nil}$-module.
Denote by $H^{>0}(L_\infty ,k)$ the positive degree cohomology.

\begin{thm}\label{thm A} The functor $\Phi _L$ is an equivalence if and only if the $L_{\nil}$-module $H^{>0}(L_\infty ,k)$ has no subquotients isomorphic to the trivial module $k$. For example $\Phi _L$ is an equivalence if $L$ is nilpotent.
\end{thm}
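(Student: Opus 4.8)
The plan is to reduce the statement to a comparison of $\Ext$-algebras and then to compute both sides through the extension $0\to L_\infty\to L\to L_{\nil}\to 0$. Write $\cA=U(L)\modul$ and $\cA_I=(U(L)\modul)_I$. Since $\cA_I$ consists of the finitely generated $I$-torsion modules, and every such $M$ satisfies $I^nM=0$ for some $n$ (as $I$ is a two-sided ideal and $U(L)/I^n$ is finite dimensional by PBW), the objects of $\cA_I$ are exactly the finite-dimensional modules on which $L$ acts nilpotently, and the only simple object is the trivial module $k$. By the standard criterion for a Serre subcategory, $\Phi_L$ is fully faithful — hence, because $D^b_I(\cA)$ is generated by $\cA_I$, an equivalence — if and only if the natural maps $\Ext^i_{\cA_I}(X,Y)\to\Ext^i_{\cA}(X,Y)$ are isomorphisms for all $X,Y\in\cA_I$ and all $i$. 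By dévissage along composition series, whose factors are all $k$, this reduces to the single comparison $\Ext^i_{\cA_I}(k,k)\to\Ext^i_{\cA}(k,k)$, whose target is $\Ext^i_{U(L)}(k,k)=H^i(L,k)$.

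The key structural input is that $L_\infty$ acts trivially on every object of $\cA_I$. Indeed, if $M$ is finite dimensional with $L$ acting nilpotently, the image $\bar L$ of $L$ in $\mathfrak{gl}(M)$ is a Lie algebra of nilpotent endomorphisms, so by Engel's theorem $\bar L$ is nilpotent; since the lower central series is preserved by the surjection $L\twoheadrightarrow\bar L$ and stabilizes, $\bar L_\infty=0$ forces $L_\infty$ to act by $0$. Thus inflation identifies $\cA_I$ with the corresponding torsion category for the \emph{nilpotent} algebra $L_{\nil}$. I would then settle the nilpotent case directly: for nilpotent $\mathfrak n$ the $I$-adic completion $\widehat{U(\mathfrak n)}$ is flat over the Noetherian ring $U(\mathfrak n)$, so the finite Chevalley–Eilenberg (Koszul) resolution $U(\mathfrak n)\otimes\Lambda^\bullet\mathfrak n\to k$ base-changes to a finite free resolution over the completion, and computing $\Ext$ of the torsion module $k$ from it yields $\Ext^i_{\cA_I}(k,k)\cong H^i(\mathfrak n,k)$. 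Combining, $\Ext^i_{\cA_I}(k,k)\cong H^i(L_{\nil},k)$, and under these identifications the comparison map is precisely the inflation map $H^i(L_{\nil},k)\to H^i(L,k)$ attached to $0\to L_\infty\to L\to L_{\nil}\to 0$.

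It remains to decide when inflation is an isomorphism in every degree. Rather than chase the Hochschild–Serre spectral sequence, where a priori higher rows could cancel, I would argue at the derived level. Let $C$ be the cone of $k\to R\Gamma(L_\infty,k)$ in $D^b(L_{\nil}\modul)$, so $H^q(C)=H^q(L_\infty,k)$ for $q>0$ and vanishes otherwise. The Grothendieck composite-functor identity $R\Gamma(L,k)\cong R\Gamma(L_{\nil},R\Gamma(L_\infty,k))$ shows that inflation is an isomorphism in all degrees if and only if $R\Gamma(L_{\nil},C)=0$. Here I use two facts about the nilpotent algebra $L_{\nil}$: (i) by Cartan's homotopy formula $\cL_x=d\iota_x+\iota_x d$, for a nonzero character $\lambda$ the operator $\cL_x$ with $\lambda(x)\neq 0$ is simultaneously null-homotopic and invertible on the Chevalley–Eilenberg complex of $k_\lambda$, whence $H^\bullet(L_{\nil},k_\lambda)=0$; this gives $\Ext^1(k_\lambda,k_\mu)=0$ for $\lambda\neq\mu$, so the category of finite-dimensional $L_{\nil}$-modules splits into blocks indexed by characters and $R\Gamma(L_{\nil},-)$ annihilates every block except the trivial one; and (ii) on the trivial block (modules with all composition factors $k$), $R\Gamma(L_{\nil},-)$ is conservative, since the lowest nonzero cohomology module $V$ of a complex contributes $\Hom_{L_{\nil}}(k,V)\neq 0$ as a surviving corner term. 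Hence $R\Gamma(L_{\nil},C)=0$ if and only if the trivial-block part of $C$ vanishes, i.e. if and only if $H^{>0}(L_\infty,k)$ has no subquotient isomorphic to $k$. The nilpotent example is the case $L_\infty=0$, where $H^{>0}(L_\infty,k)=0$.

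The main obstacle I anticipate is the nilpotent base case: proving rigorously that passing to the torsion category (equivalently, to the $I$-adic completion) does not change $\Ext(k,k)$. This needs care with flatness of the completion, with the fact that Yoneda $\Ext$ in the torsion subcategory agrees with $\Ext$ computed from the finite free resolution over $\widehat{U(\mathfrak n)}$, and with a non-algebraically-closed base field — the latter handled by faithfully flat descent along $k\to\bar k$, under which "no trivial subquotient" is preserved. Once this is in place, the block decomposition and the derived reformulation of inflation make both directions of the equivalence fall out without delicate spectral-sequence bookkeeping.
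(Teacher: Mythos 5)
The gap is in your nilpotent base case, the identification $\Ext^\bullet_{\cA_I}(k,k)\cong H^\bullet(\mathfrak{n},k)$, and it is not a routine verification: it is the technical heart of the theorem, and both directions of your chain (equivalence of $\Phi_L$) $\Leftrightarrow$ (inflation is an isomorphism) $\Leftrightarrow$ (no trivial subquotients) run through it. There are two distinct problems. First, flatness of $\widehat{U(\mathfrak{n})}$ over $U(\mathfrak{n})$ is not a formal analogue of the commutative statement: for a noncommutative Noetherian ring, completion at a two-sided ideal is flat only in the presence of something like the Artin--Rees property for that ideal, and establishing this property for the augmentation ideal of a nilpotent enveloping algebra is a result of exactly the same depth as the paper's Theorem \ref{thm B} (graded Noetherianity of the Rees algebra $\bigoplus_n I^n$) --- which, note, holds \emph{only} for nilpotent $L$, so nilpotence must enter essentially and no soft argument can suffice. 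Second, and more fundamentally, even granting flatness, base-changing the Chevalley--Eilenberg resolution computes $\Ext^\bullet_{\widehat{U}}(k,k)$, i.e.\ Ext in the ambient category of $\widehat{U}$-modules, not $\Ext^\bullet_{\cA_I}(k,k)$. In degree $1$ the two agree for free (an extension of torsion by torsion is torsion), but for $i\ge 2$ a Yoneda extension passes through intermediate modules that need not be torsion, and comparing torsion-category Ext with ambient Ext is exactly the problem you set out to solve, now transplanted to $\widehat{U}$. A concrete warning: $\Ext^2_{k[x]/(x^n)}(k,k)=k$ for every $n$, while $\Ext^2_{\cA_I}(k,k)=\colim_n\Ext^2_{k[x]/(x^n)}(k,k)$ must vanish, so no degree-by-degree formal argument from resolutions over quotients or completions can work. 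The missing idea is precisely the paper's Artin--Rees-type lemma (the lemma following Proposition \ref{prop general}): graded Noetherianity of the Rees algebra gives, for every finitely generated $M$, a submodule $N\subset M$ with $N_I=0$ and $M/N$ torsion; from this one replaces any complex with torsion cohomology by a quasi-isomorphic complex of torsion modules, proving that $\Phi_{\mathfrak{n}}$ is an equivalence directly. Your proposal needs this lemma, or an equivalent substitute, and contains no route to it.

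The rest of your plan is correct and in places cleaner than the paper's. The d\'evissage to the comparison of $\Ext^\bullet(k,k)$ and the identification of the torsion categories for $L$ and $L_{\nil}$ via Engel's theorem coincide with the paper's argument. Your treatment of the equivalence between the inflation condition and the subquotient condition is genuinely different in presentation: the paper proves the key vanishing $H^\bullet(L_{\nil},k_\lambda)=0$ for $\lambda\neq0$ by triangularizing the action on the standard complex and then runs a ``top nonvanishing degree'' corner argument in the Hochschild--Serre spectral sequence, whereas your proof via Cartan's homotopy formula (the Lie derivative is null-homotopic yet equals $\lambda(x)\id$ plus a nilpotent operator, hence invertible) together with the block decomposition and conservativity of $R\Gamma(L_{\nil},-)$ on the trivial block is tidier and avoids spectral-sequence bookkeeping. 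Both arguments, however, require $k$ algebraically closed of characteristic zero (or your descent remark) for the character/block decomposition --- a hypothesis the paper leaves implicit as well.
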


We find it natural to approach Theorem \ref{thm A} by studying the graded  Rees algebra
$$U(L)^*:=\bigoplus _{n\geq 0}I^n=U(L)\oplus I\oplus I^2\oplus \cdots $$
It is easy to prove the following result.

\begin{prop} If the algebra $U(L)^*$ is graded left Noetherian, then the functor $\Phi _L$ is an equivalence.
\end{prop}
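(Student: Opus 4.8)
The plan is to show that $\Phi_L$ is fully faithful, after which essential surjectivity is automatic. Write $\cA=U(L)\modul$ and $\cB=(U(L)\modul)_I$. Essential surjectivity is formal: any $X\in D^b_I(\cA)$ is a finite iterated extension, via the truncation triangles, of its shifted cohomology objects $H^i(X)[-i]$, each of which lies in $\cB\subset D^b_I(\cA)$ and hence in the image of $\Phi_L$; since the essential image of a fully faithful triangulated functor is a triangulated subcategory, full faithfulness forces it to contain all of $D^b_I(\cA)$. By the usual dévissage on the amplitude of complexes, $\Phi_L$ is fully faithful as soon as for all $M,N\in\cB$ and all $i$ the natural map
\begin{equation}\label{eq:extcomp}
\Ext^i_{\cB}(M,N)\longrightarrow \Ext^i_{\cA}(M,N)
\end{equation}
is an isomorphism, the right-hand side being $\Hom_{D^b(\cA)}(M,N[i])$ computed in the full subcategory $D^b_I(\cA)$. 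So the whole content is the comparison \eqref{eq:extcomp}.

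I would reformulate \eqref{eq:extcomp} as a statement about the $I$-torsion theory on the left Noetherian ring $U(L)$. Let $\Gamma_I$ be the $I$-torsion functor on all $U(L)$-modules, let $\cT$ denote the category of all $I$-torsion $U(L)$-modules, and recall that the torsion theory is called \emph{stable} when injective envelopes of torsion modules are again torsion, equivalently when $\Gamma_I$ carries injective $U(L)$-modules to injective $U(L)$-modules. Assuming stability, \eqref{eq:extcomp} is immediate: by stability $N$ embeds into a torsion module that is injective as a $U(L)$-module, and iterating produces a resolution $N\to J^\bullet$ by torsion modules $J^j$ that are injective in $\cT$ and simultaneously injective over $U(L)$. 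Since $M$ is torsion, $\Hom_{U(L)}(M,J^\bullet)$ computes $\Ext^i_{\cA}(M,N)$; since the $J^j$ are injective in $\cT$ and $\Hom_{\cT}$ agrees with $\Hom_{U(L)}$ on torsion modules, the same complex computes $\Ext^i_{\cT}(M,N)$, which for finitely generated $M,N$ equals $\Ext^i_{\cB}(M,N)$ because $U(L)$ is Noetherian, so that $\cB$ is exactly the subcategory of Noetherian objects of the locally Noetherian category $\cT$. Thus stability of the $I$-torsion theory already yields that $\Phi_L$ is an equivalence.

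It remains to deduce stability from the hypothesis that $U(L)^*=\bigoplus_{n\geq 0}I^n$ is graded left Noetherian. Here I would invoke the standard chain of implications from noncommutative Noetherian ring theory: left Noetherianity of the Rees algebra forces the two-sided ideal $I$ to satisfy the left Artin–Rees property, and the Artin–Rees property in turn guarantees that the associated torsion theory is stable. This is where the hypothesis does the real work and where the argument is genuinely noncommutative: over a commutative Noetherian ring the $I$-torsion theory is automatically stable, by Matlis' decomposition of injectives, so that \eqref{eq:extcomp} would hold unconditionally; over $U(L)$ this can fail, and Noetherianity of the Rees algebra is precisely the input that restores it.

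I expect the main obstacle to be exactly this last step — extracting Artin–Rees, and through it stability, from the graded Noetherianity of $U(L)^*$ — together with the bookkeeping needed to move between $\cT$, where injective envelopes and $\Gamma_I$ naturally live, and the finitely generated torsion category $\cB$ on which $\Phi_L$ is defined. The latter passage is routine once one knows $U(L)$ is Noetherian, but it should be stated carefully so that the torsion-category $\Ext$ computed with the $J^\bullet$ above is correctly identified with $\Ext_{\cB}$.
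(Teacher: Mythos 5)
Your proof is correct, but it takes a genuinely different route from the paper's. The paper never mentions injective envelopes, stability, or the Artin--Rees property: its key step (Proposition \ref{prop general}) works directly with complexes, showing that if every finitely generated module $M$ has a submodule $N$ with $N_I=0$ and $M/N$ torsion, then any bounded complex $A^\bullet$ with torsion cohomology admits a quasi-isomorphism $A^\bullet\to B^\bullet$ to a complex of torsion modules (at the lowest non-torsion degree one splits off an acyclic subcomplex $P\xrightarrow{\sim}d(P)$ with $P_I=0$, and iterates); this single statement gives essential surjectivity immediately, and full faithfulness as well, since any roof $D^\bullet\to A^\bullet\leftarrow C^\bullet$ can then be pushed into $C^b((R\modul)_I)$. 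The splitting hypothesis is in turn extracted from graded Noetherianity via finite generation of the graded submodule $\bigoplus_n(I^nM\cap M_I)$ of the Rees module $\bigoplus_n I^nM$ --- exactly the computation behind your ``Rees Noetherian $\Rightarrow$ Artin--Rees'' step, but applied only to the single submodule $M_I$ and exploited for a conclusion much weaker than stability. What your route buys: it situates the proposition in the classical torsion-theoretic framework (Rees Noetherian $\Rightarrow$ AR property $\Rightarrow$ stable torsion theory), makes the Ext-comparison available as a reusable criterion, and explains why the statement is automatic over commutative Noetherian rings. What the paper's route buys: it is elementary and self-contained, avoiding injectives altogether --- a real economy here, since injective resolutions force you out of finitely generated modules and into precisely the locally Noetherian bookkeeping (Ext computed in finitely generated torsion modules, versus all torsion modules, versus all $U(L)$-modules) that you correctly flag as the delicate part of your argument. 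Two details to pin down if you write yours up: the hypothesis is \emph{graded} left Noetherianity, so when citing ``Rees Noetherian implies AR'' you should note that the standard proof only invokes finite generation of graded submodules of graded modules and hence works verbatim under the graded hypothesis; and the identification of $\Ext$ in finitely generated torsion modules with $\Ext$ in all torsion modules should be justified by the standard equivalence between the bounded derived category of the Noetherian objects and the complexes with Noetherian cohomology in the ambient locally Noetherian category.
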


It is, however, not necessary for $U(L)^*$ to be graded left Noetherian in order for $\Phi _L$ to be an equivalence.

The next theorem may be of independent interest.

\begin{thm}\label{thm B} The algebra $U(L)^*$ is graded left Noetherian if and only if $L$ is nilpotent.
\end{thm}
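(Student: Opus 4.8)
The plan is to show that both implications are controlled by the ideal $L_\infty$, via two elementary facts: the inclusion $L_n\subseteq I^n$, and the fact that $U(L)$ is an integral domain. For the first, note $L_1=L\subseteq I$, and if $x\in L$ and $y\in L_n\subseteq I^n$ then $[x,y]=xy-yx\in I\cdot I^n+I^n\cdot I=I^{n+1}$; induction gives $L_n\subseteq I^n$ for all $n$, and hence $L_\infty=\bigcap_n L_n\subseteq\bigcap_n I^n$. For the second, the PBW theorem identifies $\gr U(L)=\Sym(L)$, which is a domain, so $U(L)$ is a domain.

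For the "only if" direction I would argue the contrapositive. Suppose $L$ is not nilpotent and choose $0\neq w\in L_\infty$, so that $w\in I^n$ for every $n$; write $w_{(n)}$ for $w$ regarded as an element of the summand $I^n=(U(L)^*)_n$. Let $J_N=\sum_{n=1}^N U(L)^*\cdot w_{(n)}$, a graded left ideal. Its degree-$m$ component is
\[ (J_N)_m=\sum_{n=1}^{\min(N,m)} I^{\,m-n}\,w\subseteq I^m, \]
and since $I^{j}$ decreases with $j$ the largest summand dominates, giving $(J_N)_{N+1}=I\,w$. Now $w\notin Iw$: if $w=bw$ with $b\in I$ then $(1-b)w=0$ in the domain $U(L)$ forces $b=1\notin I$, a contradiction. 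Hence $w_{(N+1)}\notin J_N$, so $J_1\subsetneq J_2\subsetneq\cdots$ is a strictly increasing chain of graded left ideals and $U(L)^*$ is not graded left Noetherian.

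For the "if" direction I would pass to an associated graded ring. Equip $U(L)^*$ with the filtration $\Gamma_d=\bigoplus_n(F_d\cap I^n)$ induced by the PBW filtration $F_\bullet$ of $U(L)$; it is exhaustive, separated and non-negative, so it suffices to prove that $\gr^\Gamma U(L)^*=\bigoplus_n \gr^{F}(I^n)$ is Noetherian. This ring is commutative (symbols multiply as in $\Sym(L)$) and contains the blow-up algebra $B=\bigoplus_n\mathfrak m^n s^n\subseteq\Sym(L)[s]$ of the irrelevant ideal $\mathfrak m=(L)$, which is generated over $\Sym(L)$ by $L\cdot s$ and hence is a finitely generated, so Noetherian, $k$-algebra. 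The symbols in $\gr^{F}(I^n)$ beyond $\mathfrak m^n$ arise from elements of $I^n$ of PBW-degree $<n$, that is, ultimately from the iterated brackets spanning $L_j$ with $j\le n$. When $L$ is nilpotent of class $c$ we have $L_j=0$ for $j>c$, so these contributions are bounded and $\gr^\Gamma U(L)^*$ is generated over $B$ by the finitely many symbols of bases of $L_2,\dots,L_c$; it is therefore a finitely generated commutative $k$-algebra, Noetherian by the Hilbert basis theorem, and consequently $U(L)^*$ is graded left Noetherian.

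The main obstacle is the computational heart of the "if" direction: identifying $\gr^{F}(I^n)$ precisely and verifying that nilpotency really does bound the excess of the Rees degree over the PBW degree, so that $\gr^\Gamma U(L)^*$ is finite over $B$. By contrast, the "only if" direction is essentially immediate once one combines $L_\infty\subseteq\bigcap_n I^n$ with the domain property, which is what makes the copies $w_{(n)}$ mutually independent across degrees.
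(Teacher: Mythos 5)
Your ``only if'' direction is correct and is essentially identical to the paper's own argument (Subsection \ref{only if}): both rest on the two facts you isolate, namely $L_\infty\subseteq\bigcap_n I^n$ and the domain property of $U(L)$ giving $w\notin Iw$, and both exploit the copies of a fixed $0\neq w\in L_\infty$ placed in every Rees degree. The paper packages this as a single graded left ideal $\bigoplus_n U(L)x_n$ that cannot be finitely generated, while you phrase it as a strictly increasing chain $J_1\subsetneq J_2\subsetneq\cdots$; these are interchangeable, and your computation $(J_N)_{N+1}=Iw$ is right.

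The ``if'' direction, however, has a genuine gap, and you name it yourself. Everything hinges on identifying $\gr^F(I^n)$, and you never do it: the sentence asserting that the symbols in $\gr^{F}(I^n)$ beyond $\mathfrak m^n$ come from iterated brackets, so that $\gr^\Gamma U(L)^*$ is generated over $B$ by symbols of bases of $L_2,\dots,L_c$, is precisely the statement that needs proof, not an argument for it. The hard inclusion is the upper bound on $I^n$: a priori $I^n$ is spanned by products $u_1\cdots u_n$ with $u_i\in I$, and rewriting such a product in a PBW basis produces lower-degree correction terms whose structure must be controlled before you can conclude anything about finite generation. The paper does exactly this in Lemma \ref{I-power}: choosing a basis $\{e_i\}$ adapted to the lower central series, with weights $\nu_i$, it proves by a double induction (on the number of factors, then on the index of the out-of-order factor) that $I^n$ has PBW basis $\bigl\{e^a\bigm\vert \sum_i a_i\nu_i\ge n\bigr\}$. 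Nilpotency enters precisely there, through the relation \eqref{key-ineq} encoding $[L_i,L_j]\subseteq L_{i+j}$ in the adapted basis --- not merely in the bookkeeping remark that $L_j=0$ for $j>c$. Only with this explicit description does finite generation of the commutative bigraded algebra $\gr U(L)^*$ (Proposition \ref{graded is Noetherian}), and hence, via the standard filtered-to-graded lemma you also invoke (Lemma \ref{graded noeth then graded}), the Noetherianity of $U(L)^*$, actually follow. Your generating set is the right one and your plan is the paper's plan; what is missing is its computational heart.
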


In the last section of the paper we mention an application of Theorem~\ref{thm A} to the cohomology of quasi-coherent sheaves which are equivariant with respect to a unipotent group.

In this paper we consider only left modules, but all the results are also valid (with the same proofs) for right modules.

We fix a field $k$. All Lie algebras are finite dimensional over $k$. All associative rings are unital.
\vskip 6pt
\noindent{\it Acknowledgements.} We thank Grigory Papayanov for a useful discussion.

\section{A criterion for equivalence of categories}\label{sect criterion}
Let $R$ be an associative left Noetherian ring with a 2-sided ideal $I\subset R$. Let $M$ be a left $R$-module. An element $m\in M$ is called $I$-{\it torsion}, if $I^nm=0$ for some $n>0$. The collection of $I$-torsion elements in $M$ is an $R$-submodule, which we denote by $M_I$. We say that $M$ is torsion if $M_I=M$.

Let $R\modul$ denote the abelian category of finitely generated left $R$-modules and let $(R\modul)_I\subset R\modul$ be its full Serre subcategory of $I$-torsion modules. Let $C^b(R\modul)$  (resp. $C^b((R\modul)_I)$) be the category of bounded complexes over $R\modul$ (resp. over $(R\modul)_I$) and let  $C^b_I(R\modul)\subset C^b(R\modul)$ be the full subcategory of complexes whose cohomology groups are torsion.

In the bounded derived category $D^b(R\modul)$ consider the full subcategory $D^b_I(R\modul)$ of complexes with torsion cohomology groups.
We have the obvious functor
$$\Phi =\Phi _R :D^b((R\modul)_I)\to D^b_I(R\modul)$$

\begin{prop}\label{prop general} Assume that for every finitely generated left $R$-module $M$ there exists a submodule $N\subset M$ such that $N_I=0$ and $M/N$ is $I$-torsion.
Then the functor $\Phi$ is an equivalence.
\end{prop}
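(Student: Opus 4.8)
The plan is to prove that $\Phi$ is fully faithful and essentially surjective, reducing both to a comparison of $\Ext$-groups. Write $\cA=R\modul$ and $\cB=(R\modul)_I$. First I would observe that $\Phi$ is $t$-exact: the heart of $D^b(\cB)$ is $\cB$, the heart of the induced $t$-structure on $D^b_I(R\modul)\subset D^b(R\modul)$ is again $\cB$, and $\Phi$ is the identity on hearts. Since $D^b_I(R\modul)$ is a full subcategory of $D^b(R\modul)$ we have $\Hom_{D^b_I}(B,B'[n])=\Ext^n_{\cA}(B,B')$, while $\Hom_{D^b(\cB)}(B,B'[n])=\Ext^n_{\cB}(B,B')$. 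A standard dévissage (induction on the amplitude of the two complexes, using the truncation triangles, the long exact $\Hom$-sequences and the five lemma) then reduces full faithfulness to the claim that the natural maps
\[
\theta^n\colon \Ext^n_{\cB}(B,B')\longrightarrow \Ext^n_{\cA}(B,B')
\]
are isomorphisms for all $B,B'\in\cB$ and all $n\ge 0$. Granting this, essential surjectivity is automatic: the essential image of $\Phi$ is a triangulated subcategory (fullness guarantees that every morphism between objects of the image lifts, so the image is closed under cones), it contains $\cB$, and every object of $D^b_I(R\modul)$ is a finite iterated cone of its shifted cohomology objects $H^i(C)[-i]\in\cB$.

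It remains to establish $\theta^n$. For $n=0$ this is the equality of $\Hom$-groups in a full subcategory, and for $n=1$ it follows because $\cB$ is closed under extensions in $\cA$. For general $n$ I would use the torsion functor. The inclusion $\cB\hookrightarrow\cA$ admits the right adjoint $\Gamma_I\colon M\mapsto M_I$, because any morphism from a torsion module into $M$ factors through $M_I$; being right adjoint to an exact functor, $\Gamma_I$ is left exact and preserves injectives. Passing to the ambient Grothendieck categories — all torsion modules $\cT$ inside $R\text{-}\Mod$, where injective resolutions exist — the identity $\Hom_{R\text{-}\Mod}(B,-)=\Hom_{\cT}(B,-)\circ\Gamma_I$ for torsion $B$ gives a Grothendieck spectral sequence
\[
\Ext^p_{\cT}(B,R^q\Gamma_I(M))\Longrightarrow \Ext^{p+q}_{R\text{-}\Mod}(B,M).
\]
Since $R$ is left Noetherian, a finitely generated torsion module is annihilated by a power of $I$, and the Yoneda $\Ext$-groups computed in the finitely generated categories $\cA,\cB$ agree with those computed in $R\text{-}\Mod,\cT$ (this finite-generation comparison is routine over a Noetherian ring). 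Under these identifications $\theta^n$ becomes the edge map of the spectral sequence with $M=B'\in\cB$, so everything comes down to showing that every torsion module $B'$ is $\Gamma_I$-acyclic, i.e. $R^q\Gamma_I(B')=0$ for $q>0$.

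This acyclicity statement is exactly where the hypothesis of the proposition enters, and I expect it to be the main obstacle. Using $R^q\Gamma_I(B')=\varinjlim_n \Ext^q_R(R/I^n,B')$, the task is to show that this colimit vanishes for $q>0$, and the hypothesis is precisely what lets one peel off torsion-free parts so that the transition maps in the colimit die — as already happens in the model case $R=k[x]$, $I=(x)$, where the induced maps on $\Ext^1(R/I^n,k)$ are zero. Concretely, the hypothesis furnishes, for each $M$, a short exact sequence $0\to N\to M\to T\to 0$ with $N$ torsion-free and $T$ torsion, and feeding such sequences into $R\Gamma_I$ should allow one to induct and conclude that the derived torsion functor restricts to the identity on torsion objects. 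Equivalently, one can argue directly with Yoneda extensions: given an $n$-fold extension of $B$ by $B'$ in $\cA$, the torsion quotients $M\twoheadrightarrow T$ supplied by the hypothesis let one replace each middle term by a torsion module compatibly along the sequence (surjectivity of $\theta^n$), with a dual surgery giving injectivity. In either formulation the one genuinely delicate point is the compatible choice of torsion-free subobjects along a whole resolution; this is the step that breaks down without the hypothesis and that I expect to require the most care.
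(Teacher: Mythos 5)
Your reduction is fine as far as it goes: the d\'evissage reducing the equivalence of $\Phi$ to the statement that $\Ext^n_{(R\modul)_I}(B,B')\to\Ext^n_{R\modul}(B,B')$ is an isomorphism for all torsion $B,B'$ and all $n$ is standard and correct, as is the further translation (via the right adjoint $\Gamma_I$ and the Grothendieck spectral sequence, modulo the locally-noetherian comparison between finitely generated and arbitrary modules) into the acyclicity statement $R^q\Gamma_I(B')=0$ for $q>0$. The problem is that this is where your proof stops. Every step you actually carry out is formal homological algebra that holds for any Serre subcategory; the hypothesis of the proposition --- that every finitely generated $M$ has a submodule $N$ with $N_I=0$ and $M/N$ torsion --- is never used in a completed argument. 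You say the hypothesis ``should allow one to induct,'' that the transition maps in $\varinjlim_n\Ext^q_R(R/I^n,B')$ ``die,'' and that the compatible surgery along a resolution ``is the step that I expect to require the most care.'' That deferred step is the entire mathematical content of the proposition, so what you have is a correct reduction followed by a genuine gap, not a proof.

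For comparison, the paper's argument is short and entirely avoids your spectral-sequence machinery; it supplies exactly the surgery you gesture at, at the level of complexes. Given a bounded complex $A^\bullet$ with torsion cohomology, let $t$ be the lowest degree where $A^t$ is not torsion, and use the hypothesis to choose $P\subset A^t$ with $P_I=0$ and $A^t/P$ torsion. Since $A^{t-1}$ and $H^t(A^\bullet)$ are torsion, $\ker d^t$ is torsion, so $P\cap\ker d^t=0$ and $P\xrightarrow{\sim} d^t(P)$ is an acyclic subcomplex; dividing by it leaves the cohomology unchanged and makes all components in degrees $\le t$ torsion. Iterating produces a quasi-isomorphism $A^\bullet\to B^\bullet$ onto a complex of torsion modules, which gives essential surjectivity at once, and fullness and faithfulness follow by applying the same replacement to the vertex of any roof $D^\bullet\to A^\bullet\leftarrow C^\bullet$. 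If you want to salvage your approach, this degree-by-degree peeling is precisely the mechanism that makes your ``replace each middle term by a torsion module compatibly along the sequence'' work; the key observation you are missing is that the hypothesis lets you choose the torsion-free piece $P$ so that it injects under the differential, because kernels of differentials are already torsion below the critical degree.
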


\begin{proof} Let $A^\bullet$ be an object of $C^b_I(R\modul)$. We claim there exists an object  $B^\bullet$ of $C^b((R\modul)_I)$ and a morphism of complexes $f:A^\bullet \to B^\bullet$ which is a quasi-isomorphism. Indeed, let
$$A^\bullet=0\to A^i\stackrel{d^i}{\longrightarrow} A^{i+1}\stackrel{d^{i+1}}{\longrightarrow} \cdots\stackrel{d^{n-1}}{\longrightarrow} A^n\to 0$$
and let $t$ be the lowest index such that $A^t_I\neq A^t$. By assumption there exists a submodule $P\subset A^t$ such that $P_I=0$ and $(A^t/P)_I=A^t/P$. We claim that $P\cap \ker d^t=0$. Indeed, since $H^t(A^\bullet)$ and $A^{t-1}$ are torsion, it follows that
$\ker d^t$ is torsion, so $P\cap \ker d^t=0$. Therefore, the complex $A^\bullet$ contains an acyclic subcomplex $\tilde{P}:=P\stackrel{\sim}{\to}d^t(P)$ and the components with index $\leq t$ of the quotient complex $A^\bullet /\tilde{P}$ are torsion. Iterating this process we find the required quasi-isomorphism $f:A^\bullet \to B^\bullet$. This shows that the functor $\Phi$ is essentially surjective.

For complexes $C^\bullet ,D^\bullet$ representing objects in $D^b((R\modul)_I)$, a morphism $\Phi (D^\bullet) \to \Phi (C^\bullet)$ is represented by a diagram of complexes $D^\bullet\to  A^\bullet \stackrel{s}{\leftarrow} C^\bullet$, where $A^\bullet \in C^b_I(R\modul)$ and $s$ is a quasi-isomorphism. The fact that the functor $\Phi$ is full and faithful now follows, since (as shown above) there exists a complex $B^\bullet \in C^b((R\modul)_I)$ and a morphism $f:A^\bullet \to B^\bullet$ of complexes that is a quasi-isomorphism.
\end{proof}

Consider now the graded Rees algebra
$$R^*:=\bigoplus _{n\geq 0}I^n=R\oplus I\oplus I^2\oplus \cdots $$

\begin{lemma} Assume that the algebra $R^*$ is graded left Noetherian (i.e. every graded left ideal is finitely generated). Then the assumption of Proposition \ref{prop general} holds: for any finitely generated left $R$-module $M$ there exists a submodule $N\subset M$ such that $N_I=0$ and $M/N=(M/N)_I$
\end{lemma}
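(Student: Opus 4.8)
The plan is to show that $N=I^nM$ serves as the required submodule once $n$ is large enough, and that the existence of such an $n$ is precisely a graded Artin--Rees statement applied to the torsion submodule $M_I$. Two of the three needed properties are immediate for $N=I^nM$: the quotient $M/I^nM$ is annihilated by $I^n$, hence is $I$-torsion; and an element of $I^nM$ is $I$-torsion as an element of $I^nM$ exactly when it is $I$-torsion in $M$, so $(I^nM)_I=I^nM\cap M_I$. Everything therefore reduces to producing an $n$ with $I^nM\cap M_I=0$.

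To find such an $n$ I would run the Artin--Rees argument directly through the Rees algebra. Form the Rees module $M^*:=\bigoplus_{n\geq 0}I^nM$, a graded $R^*$-module generated by $M^*_0=M$ in degree $0$; since $M$ is finitely generated over $R=R^*_0$, the module $M^*$ is a finitely generated graded $R^*$-module. The intersection filtration $\bigoplus_{n\geq 0}(I^nM\cap M_I)$ is a graded $R^*$-submodule of $M^*$, using that $I\cdot(I^nM\cap M_I)\subseteq I^{n+1}M\cap M_I$ (the inclusion in $M_I$ holds because $M_I$ is an $R$-submodule). As $R^*$ is graded left Noetherian, this submodule is finitely generated, say with homogeneous generators in degrees $\leq c$; the standard degree-shift argument then gives $I^nM\cap M_I=I^{n-c}(I^cM\cap M_I)$ for all $n\geq c$.

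It remains to feed in the torsion hypothesis. Since $R$ is left Noetherian and $M$ is finitely generated, $M_I$ is finitely generated, and being $I$-torsion it is annihilated by a single power $I^s$ (take the maximum of the exponents killing a finite generating set). Combining this with the relation above, for $n\geq c+s$ we obtain $I^nM\cap M_I=I^{n-c}(I^cM\cap M_I)\subseteq I^{n-c}M_I=0$, the last equality because $n-c\geq s$ and $I^sM_I=0$. Hence $N:=I^nM$ satisfies $N_I=I^nM\cap M_I=0$ and $M/N=M/I^nM$ is $I$-torsion, as required. The one delicate point is the noncommutative graded Artin--Rees step, namely verifying that $M^*$ is finitely generated over $R^*$ and that the intersection filtration is a graded submodule to which graded-Noetherianity applies; once that machinery is in place, the torsion deduction is routine.
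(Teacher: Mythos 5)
Your proof is correct and follows essentially the same route as the paper: form the Rees module $\bigoplus_{n\geq 0}I^nM$, observe that $\bigoplus_{n\geq 0}(I^nM\cap M_I)$ is a graded $R^*$-submodule, invoke graded Noetherianity to get the Artin--Rees stabilization, and then kill the stabilized intersection using that a finitely generated $I$-torsion module is annihilated by a single power of $I$. The only cosmetic difference is that you annihilate $M_I$ itself by $I^s$ while the paper annihilates $I^nM\cap M_I$; both rest on the same Noetherian finite-generation point, so this is the paper's argument in slightly more explicit form.
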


\begin{proof} Let $M$ be a finitely generated $R$-module. If $M=M_I$, then we can take $N=0$. So assume that $M\neq M_I$, i.e. $I^sM\neq 0$ for all $s>0$. Consider the graded finitely generated $R^*$-module
$$\tilde{M}=M\oplus IM\oplus I^2M\oplus \cdots $$
and its graded submodule
$$P=M_I\oplus (IM\cap M_I)\oplus (I^2M\cap M_I)\oplus \cdots $$
By our assumption, $P$ is finitely generated; hence, there exists $n>0$ such that for all $m>0$
$$I^m(I^nM\cap M_I)=I^{n+m}M\cap M_I$$
As $I^n M\cap M_I$ is finitely generated and $I$-torsion, it is annihilated by $I^m$ for some $m$.  Thus,
$$(I^{m+n}M)_I =  I^{m+n}M\cap M_I = I^m (I^n M\cap M_I) = 0.$$
Putting
$N=I^{m+n}M$, we have $N_I=0$ and $(M/N)_I=M/N$.
 \end{proof}

\begin{corollary}\label{posit corollary} Assume that the algebra $R^*$ is graded left Noetherian. Then the functor $\Phi$ is an equivalence.
\end{corollary}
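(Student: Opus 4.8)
The plan is to compose the two results immediately preceding the statement. The Corollary asks, under the hypothesis that $R^*$ is graded left Noetherian, to conclude that $\Phi$ is an equivalence. But the preceding Lemma, under precisely this same hypothesis, already produces for every finitely generated left $R$-module $M$ a submodule $N\subset M$ with $N_I=0$ and $M/N=(M/N)_I$ torsion. This is verbatim the hypothesis demanded by Proposition \ref{prop general}. So first I would invoke the Lemma to verify that the decomposition property of Proposition \ref{prop general} holds; then I would invoke Proposition \ref{prop general} itself to conclude that $\Phi$ is an equivalence.

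Since the real content is distributed across the two antecedent results --- the Lemma supplies the torsion/torsion-free splitting out of the graded Noetherian hypothesis, and Proposition \ref{prop general} converts that splitting into essential surjectivity and full faithfulness of $\Phi$ --- there is no genuine obstacle remaining at this stage. The Corollary is a purely formal consequence obtained by chaining the implication $R^*\text{ graded left Noetherian}\Rightarrow\text{decomposition property}\Rightarrow\Phi\text{ an equivalence}$. The only point to check is that the hypotheses dovetail, which they do word-for-word: the conclusion of the Lemma is identical to the hypothesis of Proposition \ref{prop general}. Thus the proof amounts to a single sentence citing both statements in sequence.
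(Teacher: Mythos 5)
Your proposal is correct and is exactly the paper's (implicit) argument: the Corollary is stated without proof precisely because it follows by chaining the preceding Lemma (graded Noetherian hypothesis $\Rightarrow$ existence of $N\subset M$ with $N_I=0$ and $M/N$ torsion) with Proposition \ref{prop general} (that decomposition property $\Rightarrow$ $\Phi$ is an equivalence). Nothing further is needed.
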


\section{When is the Rees algebra of a universal enveloping algebra Noetherian}

Let $L$ be a finite dimensional Lie algebra, $U(L)$ its universal enveloping algebra and $I\subset U(L)$ the augmentation ideal. As above, we consider the graded Rees algebra
$$U(L)^*=\bigoplus _{n\geq 0}I^n=U(L)\oplus I\oplus I^2\oplus \cdots $$
The main result of this section is the following theorem.

\begin{thm} \label{Noetherianity of rees algebra} The algebra $U(L)^*$ is graded left Noetherian if and only if the Lie algebra $L$ is nilpotent.
\end{thm}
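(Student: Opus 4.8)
The plan is to prove the equivalence by establishing the two implications separately. The implication ``$L$ not nilpotent $\Rightarrow U(L)^*$ not graded Noetherian'' I would prove by exhibiting an explicit strictly increasing chain of graded left ideals, while the reverse implication I would obtain from an associated-graded computation. So suppose first that $L$ is not nilpotent, i.e.\ $L_\infty\neq 0$. Because $L$ is finite dimensional the chain $L_n$ stabilizes, whence $L_\infty=[L,L_\infty]$; an induction on $k$, writing a bracket $[a,w]=aw-wa$ with $a\in L\subseteq I$ and $w\in L_\infty\subseteq I^{k}$, then yields $L_\infty\subseteq\bigcap_n I^n$. Fix $0\neq y\in L_\infty$. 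Since $y\in I^j$ for every $j$, it defines a homogeneous element $y_{(j)}$ in each graded piece $I^j$ of $U(L)^*$, and I would consider the graded left ideals $\mathfrak a_N:=\sum_{j=1}^{N}U(L)^*y_{(j)}$. A direct computation of the degree-$(N+1)$ component gives $(\mathfrak a_N)_{N+1}=\sum_{j=1}^{N}I^{\,N+1-j}y=Iy$, while $y_{(N+1)}$ is $y$ placed in degree $N+1$. As $U(L)$ is a domain (its PBW associated graded is $\Sym(L)$), the relation $y\in Iy$ would force $(1-a)y=0$ with $a\in I$, hence $a=1\notin I$, a contradiction; thus $y_{(N+1)}\notin\mathfrak a_N$ and $\mathfrak a_1\subsetneq\mathfrak a_2\subsetneq\cdots$, so $U(L)^*$ is not graded left Noetherian.

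For the converse I assume $L$ nilpotent and fix a basis $e_1,\dots,e_d$ adapted to the lower central series, giving each $e_i$ a finite weight $w_i$ determined by $e_i\in L_{w_i}\setminus L_{w_i+1}$ (finiteness is exactly where nilpotency enters). The crux is the lemma that the $I$-adic filtration equals the weight filtration: $I^n$ is the span of the ordered PBW monomials $e_{i_1}\cdots e_{i_r}$ of total weight $\sum_j w_{i_j}\geq n$. The inclusion $\supseteq$ comes from $L_j\subseteq I^j$ (so $e_i\in I^{w_i}$) together with multiplicativity of the powers of $I$; the inclusion $\subseteq$ comes from the fact that $I$ is spanned by the weight-$\geq1$ monomials and that PBW straightening $e_ae_b=e_be_a+[e_a,e_b]$ never lowers weight, because $[e_a,e_b]\in L_{w_a+w_b}$.

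Granting the lemma, I would filter the Rees algebra by PBW degree, $\mathcal F_{\leq d}U(L)^*:=\bigoplus_n\bigl(I^n\cap U_{\leq d}\bigr)$, which is an exhaustive, bounded-below ring filtration. Its associated graded sits inside $\Sym(L)[s]$ and equals $\bigoplus_n\gr(I^n)\,s^n$ (where $s$ records the Rees grading); by the lemma $\gr(I^n)$ is the weight-$\geq n$ part of $\Sym(L)$, so the associated graded is the commutative Rees algebra $\bigoplus_n\Sym(L)^{\mathrm{wt}}_{\geq n}s^n$ of the weighted polynomial ring. This algebra is finitely generated over $k$ (by $e_1,\dots,e_d$ together with the finitely many $e_is^j$, $1\leq j\leq w_i$), hence Noetherian. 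By the standard lemma that a ring carrying an exhaustive, discrete filtration with left Noetherian associated graded is itself left Noetherian, $U(L)^*$ is left Noetherian, in particular graded left Noetherian.

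I expect the main obstacle to be the key lemma $I^n=\{\,\text{weight}\geq n\,\}$ (equivalently the identification $\gr_I U(L)\cong U(\gr_{\mathrm{lcs}}L)$ of the $I$-adic associated graded with the enveloping algebra of the graded Lie algebra of the lower central series). This is the one place where finiteness of the weights---i.e.\ nilpotency of $L$---is genuinely used, and it is precisely what makes the associated graded \emph{finitely generated} rather than merely graded; the domain property of $U(L)$, the multiplicativity used above, and the filtered-to-graded Noetherian lemma are then routine.
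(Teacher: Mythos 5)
Your proposal is correct and takes essentially the same approach as the paper in both directions: for non-nilpotent $L$ you use the same witness $y\in L_\infty\subseteq\bigcap_n I^n$ and the same $y\notin Iy$ argument via the domain property of $U(L)$ (the paper packages this as a single non-finitely-generated graded ideal $\bigoplus_n U(L)y_n$ rather than a strictly increasing chain), and for nilpotent $L$ you prove the same key lemma ($I^n$ is spanned by PBW monomials of weight $\geq n$), filter the Rees algebra by the same PBW filtration, identify the same finitely generated commutative associated graded, and invoke the same filtered-to-graded Noetherian transfer lemma.
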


Before proving the theorem we formulate a useful corollary.

\begin{cor} \label{spec cor for nilp} Let $L$ be a nilpotent Lie algebra. Then the functor
$$\Phi _{U(L)}:D^b((U(L)\modul)_I)\to D^b_I(U(L)\modul)$$
is an equivalence.
\end{cor}

\begin{proof} This follows from Theorem \ref{Noetherianity of rees algebra} and Corollary \ref{posit corollary}.
\end{proof}

\begin{proof} The proof of the theorem will take several steps and will occupy the rest of the section.

\subsection{Proof of the ``if" direction}
The universal enveloping algebra $U(L)$ has a standard increasing filtration which induces a similar filtration in the Rees algebra $U(L)^*$. We will prove that if $L$ is nilpotent, then the associated (double) graded algebra $\gr U(L)^*$ is commutative finitely generated, hence Noetherian.

So assume for now  that the Lie algebra $L$ is nilpotent.

We define $L^m$ for positive integers $m$ recursively;
$L^1 := L$, and
$$L^d := \sum_{j=1}^{d-1} [L^j,L^{d-j}]$$
for $m\ge 2$.  As $L$ is nilpotent, $L^d = \{0\}$ for all $d$ sufficiently large.
We choose an ordered basis $\{e_1,...,e_n\}$ of $L$ adapted to the decreasing filtration $L^\bullet$
in the sense that there exists a non-decreasing sequence $\nu_1\le \ldots\le \nu_n$ such that
$\{e_i\mid \nu_i\ge d\}$ spans $L^d$ for all $d$.
Thus, we can write
\begin{equation}
\label{key-ineq}
[e_i,e_j] = \sum_{\{k\mid \nu_k\ge \nu_i+\nu_j\}} c_k e_k.
\end{equation}

Write for short $U := U(L)$. We recall the standard increasing filtration $\{ U_r\}_{r\geq 0}$ of $U$  for which $U_r$ is the span of all products $x_1x_2\cdots x_r$,
where $x_i\in L\subset U$.
For every multi-index $a = (a_1,\ldots,a_n)$
of non-negative integers, we denote by $e^a$ the monomial $e_1^{a_1}\cdots e_n^{a_n}$.
Thus, $e^a\in U_{|a|}$, where $|a| := a_1 + \cdots + a_n$.  For every $x\in U$, we write $\degp x = 0$ if $x\in U_0$ and $\degp x = r$ if $x\in U_r\setminus U_{r-1}$ for $r\ge 1$.  We call this the \emph{degree} of $x$.

The PBW theorem implies that for all $r\in \N$ the image of the set
$$\{e^a: |a| = r\}\subset U_r$$
in $U_r/U_{r-1}$ is a basis.  Thus, the set of monomials $e^a$ as $a$ ranges over $\N^n$
is a basis of $U$, which we call the \emph{standard basis}, and $\degp \sum_a c_a e^a$ is the largest value of $|a|$ for which $c_a \neq 0$.
For each $r$, $U_r$ is spanned by $\{e^a: |a| \le r\}$.  Moreover, for all $x,y\in U$,
\begin{equation}
\label{PBW-sum}
\degp (x+y) \le \max(\degp x,\degp y).
\end{equation}
By the PBW theorem, if  $x$ and $y$ are non-zero elements of $U$, then
\begin{equation}
\label{PBW-prod}
\degp  xy = \degp x+\degp y.
\end{equation}

Let $I$ denote the augmentation ideal of $U$, or equivalently, the span of $e^a$ for all $a$ with $|a|\ge 1$.

\begin{lemma}
\label{I-power}
For all positive integers $m$, $I^m$ has basis
\begin{equation}
\label{power-basis}
\bigl\{e^a\bigm\vert \sum_i a_i \nu_i \ge m\bigr\}.
\end{equation}
\end{lemma}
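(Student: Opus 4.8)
The plan is to prove the two inclusions $I^m\subseteq \Span\{e^a\mid \sum_i a_i\nu_i\ge m\}$ and $\Span\{e^a\mid \sum_i a_i\nu_i\ge m\}\subseteq I^m$ separately, after which the lemma follows at once: the set $\{e^a\mid \sum_i a_i\nu_i\ge m\}$ is a subset of the PBW standard basis and so is automatically linearly independent, hence equality of the two spans makes it a basis of $I^m$. Throughout I would work with the \emph{weight} $\langle a\rangle:=\sum_i a_i\nu_i$ of a standard monomial $e^a$, and more generally the weight $\nu_{i_1}+\cdots+\nu_{i_r}$ of an arbitrary word $e_{i_1}\cdots e_{i_r}$ in the generators.

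For the inclusion $I^m\subseteq \Span\{e^a\mid \langle a\rangle\ge m\}$, the key observation is that reordering a word $e_{i_1}\cdots e_{i_r}$ into PBW-sorted form never decreases the weight. Indeed, a single transposition $e_je_i = e_ie_j+[e_j,e_i]$ (for $i<j$) leaves the weight of the sorted term $e_ie_j$ unchanged, while by \eqref{key-ineq} every generator $e_k$ occurring in $[e_j,e_i]$ satisfies $\nu_k\ge\nu_i+\nu_j$, so replacing the pair $e_ie_j$ by a single $e_k$ does not lower the total weight. Since the straightening process terminates, every standard monomial $e^b$ appearing in the expansion of $e_{i_1}\cdots e_{i_r}$ has $\langle b\rangle\ge \nu_{i_1}+\cdots+\nu_{i_r}$. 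Now $I$ is spanned by the $e^a$ with $|a|\ge 1$, so $I^m$ is spanned by products $e^{a^{(1)}}\cdots e^{a^{(m)}}$ with each $|a^{(j)}|\ge 1$. Each such product is a word of total weight $\sum_j\langle a^{(j)}\rangle\ge m$ (using $\nu_i\ge 1$ for all $i$, which holds since $\{e_i\mid \nu_i\ge 1\}$ must already span $L^1=L$), and therefore expands into standard monomials of weight $\ge m$.

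For the reverse inclusion I would first establish $L^d\subseteq I^d$ for all $d$ by induction on $d$: the base case is $L=L^1\subseteq I$, and for $d\ge 2$ the recursion $L^d=\sum_{j=1}^{d-1}[L^j,L^{d-j}]$ combined with the inductive hypothesis gives $[L^j,L^{d-j}]\subseteq I^jI^{d-j}=I^d$. Since the chosen basis is adapted to the filtration $L^\bullet$, each $e_i$ lies in $L^{\nu_i}\subseteq I^{\nu_i}$, whence $e_i^{b_i}\in I^{\nu_ib_i}$ and $e^b=\prod_i e_i^{b_i}\in I^{\sum_i b_i\nu_i}=I^{\langle b\rangle}$. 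Because the powers of $I$ are decreasing, $\langle b\rangle\ge m$ forces $e^b\in I^m$, which is the desired inclusion.

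The main obstacle is the weight-monotonicity claim underlying the first inclusion: one must make precise that the PBW straightening of a word can only raise the weight, with no transient drop at any intermediate step. I expect to handle this cleanly by inducting on the length of the word together with the number of out-of-order adjacent pairs, checking that each rewriting move is weight-non-decreasing. This is exactly the point at which the adaptedness of the basis—that the structure constants in \eqref{key-ineq} are supported on indices $k$ with $\nu_k\ge\nu_i+\nu_j$—is indispensable. The reverse inclusion, by contrast, is essentially formal and relies only on $L^d\subseteq I^d$.
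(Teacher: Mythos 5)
Your proposal is correct and follows essentially the same route as the paper: the easy inclusion via $L^d\subseteq I^d$ (induction on $d$), and the hard inclusion via a PBW straightening argument in which \eqref{key-ineq} guarantees that each rewriting step is weight-non-decreasing, organized as a double induction (the paper inducts on word length and on the out-of-place first index; your induction on length and inversion count is the same argument in slightly different form). No gaps; in particular your justification that all $\nu_i\ge 1$ and the well-foundedness of your induction both check out.
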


\begin{proof}
This set is a subset of the standard basis, so it is linearly independent.  For $0<k<d$,  $[L^{d-k},L^k] \subset L^d$, so by induction on $d$,
every $e_i\in L^d$ lies in $I^d$.  Thus, the set (\ref{power-basis}) is contained in $I^m$.

We prove by induction that every element of $I^m$ is a linear combination of elements of (\ref{power-basis}), the case $m=1$ being trivial.
It suffices to prove that if $i_0,\ldots,i_k\in \{1,2,\ldots,n\}$, $i_1\le i_2\le \cdots\le i_k$, and
$$\nu _{i_1} + \cdots + \nu _{i_k} \ge m-1,$$
then $e_{i_0}e_{i_1}\cdots e_{i_k}$ lies in the span of (\ref{power-basis}) for $m$.  We prove more precisely
that for any sequence $i_0,\ldots,i_k\in \{1,\ldots,n\}$ which is non-decreasing with the possible exception of $i_0$,
the product $e_{i_0}e_{i_1}\cdots e_{i_k}$ is spanned by terms of the form $e^a$ where $|a| \le k+1$ and
\begin{equation}
\label{nilpotent-degree}
\sum_i a_i \nu_i \ge \nu _{i_0}+\cdots +\nu _{i_k}.
\end{equation}

We use double induction, first on $k$ and then on $i_0$.  The base case $k=0$ is trivial and for given $k$ there is nothing to prove if $i_0\le i_1$, so the base case
$i_0=1$ is trivial.  If $i_0 > i_1$, then writing
$$e_{i_0}e_{i_1}e_{i_2}\cdots e_{i_k} = e_{i_1} e_{i_0} e_{i_2}\cdots e_{i_k} + [e_{i_0},e_{i_1}] e_{i_2}\cdots e_{i_k},$$
we need only prove the claim for both summands on the right hand side.  By the induction hypothesis on $k$,
$e_{i_0}a_{i_2}\cdots e_{i_k}$ is a linear combination of $e^b$ for $|b| \le k$ and
$$\sum_i b_i \nu_i \ge   \nu_{i_0}+\nu_{i_2}\cdots +\nu_{i_k}.$$
By the induction hypothesis on $i_0$, $e_{i_1}$ times any such $e^b$ is a linear combination of terms $e^a$ with $|a|\le k+1$ and $a$ satisfying \eqref{nilpotent-degree}.

By (\ref{key-ineq}), we can write $[e_{i_0},e_{i_1}]$ as a linear combination of basis vectors $e_j$ with $\nu_j \ge \nu_{i_0}+\nu_{i_1}$, so by the
induction hypothesis on $k$, $[e_{i_0},e_{i_1}] e_{i_2}\cdots e_{i_k}$ is a linear combination of terms of the form $e^a$ where $|a|\le k$ and
\begin{align*}
\sum_i a_i \nu_i &\ge \nu_j + \nu_{i_2} + \cdots + \nu_{i_k} \\
&\ge \nu_{i_0} + \nu_{i_1} + \cdots \nu_{i_k}.
\end{align*}
\end{proof}

%
%
The filtration $\{U_r\}$ of the universal enveloping algebra $U=U(L)$ induces the filtration of the graded Rees algebra $U(L)^*$:
we filter each summand $I^m$ of $U(L)^*$ by the induced filtration given by $U_\bullet$:
$$F_rU(L)^* :=  \bigoplus_{m=0}^\infty (I^m\cap U_r)$$
%
%
%
We get
$$\gr U(L)^*  = \bigoplus_{r,m} (I^m\cap U_r) / (I^m\cap U_{r-1})$$
is a (doubly) graded commutative algebra. We denote
$$U(L)^*_{r,m}=(I^m\cap U_r) / (I^m\cap U_{r-1})$$
so that $\gr U(L)^*=\bigoplus _{r,m}U(L)^*_{r,m}$
By the PBW theorem, $U_1 = L$, so by Lemma~\ref{I-power}, $U(L)^*_{1,m}$ is naturally
identified with $L^m$.

\begin{prop}\label{graded is Noetherian}
The graded algebra $\gr U(L)^*$ is a commutative graded algebra generated by $U(L)^*_{1,m}$,
$m=0,1,\ldots,s$, where $L^{s+1} = \{0\}$.
\end{prop}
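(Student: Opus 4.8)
The plan is to exhibit every monomial basis vector of $\gr U(L)^*$ as a product of elements drawn from the pieces $U(L)^*_{1,m}$. I would first record the bigraded structure explicitly. Since the PBW filtration has commutative associated graded $\gr U = \Sym L$, the algebra $\gr U(L)^*$ is commutative, and combining the description of $U_r$ (spanned by $\{e^a : |a|\le r\}$) with Lemma~\ref{I-power}, the bigraded piece $U(L)^*_{r,m}$ has as a basis the images of the monomials
$$\{e^a : |a| = r,\ \textstyle\sum_i a_i \nu_i \ge m\}.$$
In particular $U(L)^*_{1,m}$ is spanned by the classes of those $e_i$ with $\nu_i \ge m$; write $e_i^{(m)}\in U(L)^*_{1,m}$ for such a class. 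Because $L^{s+1} = \{0\}$ forces $\nu_i \le s$ for every $i$, the only generators that can occur are the $e_i^{(m)}$ with $0\le m\le \nu_i\le s$, so it suffices to generate everything by these.

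As the displayed monomials span $\gr U(L)^*$ as a vector space, I would reduce the statement to showing that each basis vector $e^a$ in bidegree $(r,m)$, where $r = |a|$ and $0\le m\le \sum_i a_i\nu_i$, lies in the subalgebra generated by the $U(L)^*_{1,m}$. The idea is to distribute the single Rees-index $m$ across the $r$ factors making up $e^a$. Concretely, for each $i$ I choose integers $m_{i,1},\ldots,m_{i,a_i}$ with $0\le m_{i,j}\le \nu_i$, and form the product
$$\prod_{i=1}^n \prod_{j=1}^{a_i} e_i^{(m_{i,j})}.$$
This lands in bidegree $(r,\ \sum_{i,j} m_{i,j})$, because the $U$-degree is additive under multiplication by \eqref{PBW-prod} and the Rees grading is additive by construction. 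By commutativity of $\gr U(L)^*$ this product is exactly the class of $e^a$, which is nonzero in the relevant bidegree precisely because $|a| = r$.

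It then remains to choose the $m_{i,j}$ so that $\sum_{i,j} m_{i,j} = m$. This is the combinatorial heart of the argument: as each $m_{i,j}$ independently ranges over the consecutive integers $0,1,\ldots,\nu_i$, the attainable values of $\sum_{i,j} m_{i,j}$ fill out \emph{every} integer from $0$ up to $\sum_i a_i\nu_i$, since one may pass from any admissible tuple to the next total by raising a single coordinate by $1$. As $0\le m\le \sum_i a_i\nu_i$ by hypothesis, the target $m$ is realized, and $e^a$ is written as a product of generators.

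The only genuine obstacle is this realizability step, together with the bookkeeping that the product of generators indeed lands in the intended bidegree and recovers $e^a$; once commutativity of $\gr U(L)^*$ and the additivity of both gradings are in hand, the remainder is routine.
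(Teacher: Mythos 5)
Your proof is correct and follows essentially the same route as the paper: identify the basis of each bigraded piece $U(L)^*_{r,m}$ via Lemma~\ref{I-power}, use $e^a\cdot e^b = e^{a+b}$ in the associated graded, and then realize any admissible Rees index $m$ by distributing it across the $r$ degree-one factors with each share bounded by $\nu_i$. The paper phrases this last step as generation of an additive monoid in $\N^{n+2}$ and leaves it as an observation; you spell out the same combinatorial fact (attainability of every total from $0$ to $\sum_i a_i\nu_i$), which is a harmless elaboration rather than a different argument.
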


\begin{proof}
From Lemma~\ref{I-power}, we see that $I^m\cap U_r$ has basis
$$\bigl\{e^a\bigm\vert \sum_i a_i \nu_i \ge m, |a| \le r\bigr\},$$
so $U(L)^*_{r,m}$ has basis
$$\bigl\{e^a\bigm\vert \sum_i a_i \nu_i \ge m, |a| = r\bigr\}.$$
Notice that under the multiplication map
$$U(L)^*_{r,m}\otimes U(L)^*_{r',m'}\to U(L)^*_{r+r',m+m'}$$
we have $e^a\cdot e^b=e^{a+b}$.
To prove that the classes $U(L)^*_{1,m}$ generate, it suffices to note that the additive monoid
$$\bigl\{(a_1,\ldots,a_n,r,m)\in \N^{n+2}\bigm\vert \sum_i a_i \nu_i \ge m, |a| = r\bigr\}$$
is generated by the set
$$\bigl\{(\underbrace{0,\ldots,0}_{i-1},1,\underbrace{0,\ldots,0}_{n-i},1,m)\bigm\vert 1\le i\le n,\,0\le m\le \nu_i\bigr\}$$\
whose elements correspond to $e_i\in L^m = U(L)^*_{1,m} \subset L$.
\end{proof}

Now we recall a useful general result.
Let $R$ be an associative ring with increasing exhausting filtration
$$0=F_{-1}\subset F_0\subset F_1\subset \cdots =R$$
and consider the associated graded ring
$$\gr R=\gr _{F_\bullet}R=\bigoplus _{n\geq 0}F_{n}/F_{n-1}$$
If $x\in F_n\setminus F_{n-1}$ we say that $x$ has degree $n$ and denote by $\bar{x}$ its image in $F_n/F_{n-1}$.

\begin{lemma} \label{graded noeth then graded} Assume that $\gr R$ is graded left Noetherian. Then the ring $R$ is left Noetherian.
\end{lemma}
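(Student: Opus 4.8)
The plan is to show that every left ideal $J\subseteq R$ is finitely generated, reducing the problem to the graded setting where we may invoke the hypothesis. The central device is the \emph{associated graded ideal}. The filtration $F_\bullet$ restricts to a filtration $J\cap F_n$ on $J$, and I would form
$$\gr J := \bigoplus_{n\geq 0} (J\cap F_n)/(J\cap F_{n-1}).$$
First I would verify that $\gr J$ is naturally a graded left ideal of $\gr R$. The inclusions $J\cap F_n\hookrightarrow F_n$ induce injections $(J\cap F_n)/(J\cap F_{n-1})\hookrightarrow F_n/F_{n-1}$ (since $J\cap F_{n-1}=(J\cap F_n)\cap F_{n-1}$), exhibiting $\gr J$ as a graded subspace of $\gr R$; and for $r\in F_m$, $x\in J\cap F_n$ one has $rx\in J$ and $rx\in F_{m+n}$ (using $F_mF_n\subseteq F_{m+n}$), so multiplication by $\gr R$ preserves $\gr J$. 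Note that $\gr J$ is automatically graded, so only the graded Noetherian hypothesis is needed, not full Noetherianity of $\gr R$.

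Since $\gr R$ is graded left Noetherian, $\gr J$ is finitely generated as a graded left ideal. I would choose finitely many homogeneous generators and represent each as the nonzero image $\bar x_i$ of some $x_i\in J\cap F_{n_i}$ with $\deg x_i=n_i$. The claim to be proved is then that the finitely many lifts $x_1,\ldots,x_k$ already generate $J$ as a left $R$-ideal.

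The heart of the matter is a leading-term reduction. Set $J':=\sum_i Rx_i\subseteq J$ and suppose $J'\neq J$; among the elements of $J\setminus J'$ choose $y$ of minimal degree $n$, which exists because the filtration is exhausting and degrees lie in the well-ordered set $\{0,1,2,\ldots\}$. The class $\bar y\in(J\cap F_n)/(J\cap F_{n-1})$ lies in $\gr J$, so I may write $\bar y=\sum_i \bar r_i\,\bar x_i$ with each $\bar r_i$ homogeneous of degree $n-n_i$ (only indices with $n_i\leq n$ contribute). Lifting each $\bar r_i$ to $r_i\in F_{n-n_i}$ and putting $z:=y-\sum_i r_i x_i$, one has $z\in J$, while the image of $z$ in $F_n/F_{n-1}$ vanishes by construction, so $\deg z\leq n-1$. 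Since $\sum_i r_i x_i\in J'$, the element $z$ cannot lie in $J'$ (otherwise $y$ would), so $z\in J\setminus J'$ has degree strictly below $n$, contradicting minimality. Hence $J=J'$ is finitely generated and $R$ is left Noetherian.

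I expect no genuine obstacle here, only bookkeeping in the reduction step: one must be careful that the equation $\bar y=\sum_i\bar r_i\bar x_i$ in degree $n$ guarantees that $\sum_i r_ix_i$ agrees with $y$ in top degree, so that the degree of $z$ strictly drops. The rest is formal, and indeed the same argument proves the stronger statement that any filtered left module $M$ with $\gr M$ finitely generated is itself finitely generated; for the lemma as stated it suffices to run it on left ideals of $R$.
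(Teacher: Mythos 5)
Your proof is correct and follows essentially the same route as the paper's: pass to the associated graded ideal $\gr J$, lift finitely many homogeneous generators, and show they generate $J$ by a leading-term reduction in the filtration degree (your minimal-counterexample argument is just the contrapositive form of the paper's induction on degree). The only difference is that you spell out the verification that $\gr J$ is a graded left ideal, which the paper asserts without proof.
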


\begin{proof} Let $I\subset R$ be a left ideal. Then
$$\gr I:=\bigoplus _{n\geq 0}(I\cap F_{n})/(I\cap F_{n-1})$$
is a graded left ideal in $\gr R$. Let $\bar{x}_1,\ldots,\bar{x}_k$ be a set of homogeneous generators of the ideal $\gr I$; say $\deg (\bar{x}_i)=n_i$. Choose lifts $x_1,\ldots,x_k\in I$ of the $\bar{x}_i$'s and let $(x_1,\ldots,x_k)\subset I$ be the corresponding left ideal. We claim that $(x_1,\ldots,x_k)=I$. Indeed, let $x\in I$ be of degree $d$. If $d=0$, then clearly $x\in (x_1,\ldots,x_k)$. Otherwise, there exist $r_i\in R$ of degree $d-n_i$ such that
$$\bar{x}=\sum \bar{r}_i\bar{x}_i$$
It follows that $x-\sum r_ix_i\in I\cap F_{d-1}$, hence by induction on $d$, $x-\sum r_ix_i\in (x_1,\ldots,x_k)$.
\end{proof}

It follows from Proposition \ref{graded is Noetherian} and Lemma \ref{graded noeth then graded} that the Rees algebra $U(L)^*$ is left Noetherian if the Lie algebra is nilpotent. This proves the ``if" direction of Theorem \ref{Noetherianity of rees algebra}.

\subsection{Proof of the ``only if" direction}\label{only if}  For a Lie algebra $L$ we consider the lower central series $L_1=L,$ $L_n=[L,L_{n-1}]$. Thus
$$L=L_1\supset L_2\supset L_3\supset\cdots$$
is a nonincreasing sequence of ideals in $L$. We put
$$L_\infty :=\bigcap_nL_n,\quad L_{\nil}=L/L_\infty$$
The Lie algebra $L_{\nil}$ is nilpotent, and $L$ is nilpotent if and only if $L_\infty =0$.

We have the short exact sequence of Lie algebras
$$0\to L_\infty \to L\to L_{\nil}\to 0$$
This induces the surjection $\theta :U(L)\to U(L_{\nil})$ and $\ker \theta$ is the ideal $U(L)L_\infty U(L)$.
As before, let $I\subset U(L)$ be the augmentation ideal.

\begin{lemma} \label{nonempty intersection} We have the equality of ideals in $U(L)$:
$$\bigcap _nI^n=\ker \theta$$
\end{lemma}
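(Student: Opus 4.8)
The plan is to establish the two inclusions $\ker\theta\subseteq\bigcap_n I^n$ and $\bigcap_n I^n\subseteq\ker\theta$ separately. The single non-formal input driving the whole argument is the self-reproducing property $L_\infty=[L,L_\infty]$, which I would deduce from finite dimensionality: the descending chain $L_1\supseteq L_2\supseteq\cdots$ of subspaces must stabilize, so there is an $N$ with $L_N=L_{N+1}=\cdots=L_\infty$, whence $L_\infty=L_{N+1}=[L,L_N]=[L,L_\infty]$. Note that this step does \emph{not} use nilpotency, which is appropriate since here $L$ is an arbitrary finite dimensional Lie algebra.

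For the inclusion $\ker\theta\subseteq\bigcap_n I^n$, I would first prove by induction on $n$ that $L_\infty\subseteq I^n$. The base case is clear since $L_\infty\subseteq L\subseteq I$. For the inductive step I would use $L_\infty=[L,L_\infty]$: every element of $L_\infty$ is a sum of brackets $[x,y]=xy-yx$ with $x\in L\subseteq I$ and $y\in L_\infty\subseteq I^n$ (inductive hypothesis), and then both $xy$ and $yx$ lie in $I^{n+1}$. Hence $L_\infty\subseteq\bigcap_n I^n$. Since each $I^n$ is a two-sided ideal, so is $\bigcap_n I^n$; being a two-sided ideal containing $L_\infty$, it must contain the two-sided ideal $U(L)L_\infty U(L)=\ker\theta$ generated by $L_\infty$.

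For the reverse inclusion I would push everything forward along $\theta$. Writing $\bar I$ for the augmentation ideal of $U(L_{\nil})$, the homomorphism $\theta$ respects augmentations, so $\theta(I)\subseteq\bar I$ and therefore $\theta(I^n)\subseteq\bar I^n$ for all $n$. It thus suffices to show $\bigcap_n\bar I^n=0$ in the enveloping algebra of the \emph{nilpotent} Lie algebra $L_{\nil}$, for then $\theta\bigl(\bigcap_n I^n\bigr)\subseteq\bigcap_n\bar I^n=0$ forces $\bigcap_n I^n\subseteq\ker\theta$. The vanishing $\bigcap_n\bar I^n=0$ I would read off directly from Lemma~\ref{I-power} applied to $L_{\nil}$: in the standard basis, $\bar I^m$ is spanned by the monomials $e^a$ with $\sum_i a_i\nu_i\ge m$, where every $\nu_i\ge 1$. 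Any nonzero $x=\sum_a c_a e^a$ involves only finitely many $a$, so $D:=\max\{\sum_i a_i\nu_i\mid c_a\neq 0\}$ is finite; by linear independence of the standard basis, $x\notin\bar I^m$ as soon as $m>D$.

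The proof is therefore short once the right tools are in place, and I expect no serious obstacle. The only point requiring genuine structural input is the identity $L_\infty=[L,L_\infty]$, and the one external appeal is to Lemma~\ref{I-power} to obtain $\bigcap_n\bar I^n=0$; the remaining steps are bookkeeping with the augmentation filtration. If anything needs care, it is simply to remember that the $\supseteq$ inclusion is valid for general $L$ while the $\subseteq$ inclusion uses nilpotency of $L_{\nil}$, so that the two halves are proved under the correct hypotheses.
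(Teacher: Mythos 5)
Your proof is correct and follows essentially the same route as the paper: the forward inclusion comes from $L_\infty\subseteq\bigcap_n I^n$ plus the fact that $\bigcap_n I^n$ is a two-sided ideal containing $L_\infty$ and hence contains $U(L)L_\infty U(L)=\ker\theta$, and the reverse inclusion is obtained by pushing forward along $\theta$ and deducing $\bigcap_n \overline{I}^n=0$ from Lemma~\ref{I-power} applied to the nilpotent algebra $L_{\nil}$. The only cosmetic difference is that the paper gets $L_\infty\subseteq I^n$ directly from the inclusions $L_n\subseteq I^n$, whereas you detour through the identity $L_\infty=[L,L_\infty]$; both are valid.
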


\begin{proof} We have by construction $L_n\subset I^n$ for all $n$, hence $L_\infty \subset \bigcap_nI^n$ and so
$$\ker \theta =U(L)L_\infty U(L) \subset \bigcap_n I^n$$

To prove the opposite inclusion let $\overline{I}\subset U(L_{\nil})$ be the augmentation ideal, so that we have the surjection $\theta :I\to \overline{I}$. It suffices to prove that $\bigcap_n \overline{I}^n=0$. This follows from Lemma \ref{I-power}.
\end{proof}

Assume that the Lie algebra $L$ is not nilpotent, i.e. $L_\infty \neq 0$. Then Lemma \ref{nonempty intersection} implies that
$L_\infty \subset \bigcap_nI^n$. Fix $0\neq x\in L_\infty$ and consider the graded left ideal
$$J=\bigoplus _nU(L)x_n\subset U(L)^*$$
where $x_n$ denotes the copy of $x$ in $I^n$. We claim that $J$ is not finitely generated. Assume, on the contrary, that
$$J=\sum _iU(L)^*f_i$$
for a finite number of homogeneous elements $f_i\in U(L)x_{n_i}$. Choose $m>n_i$ for all $i$. We claim that
$$x_m\notin \sum _iU(L)^*f_i$$
Indeed, it suffices to notice that $x\notin Ix$: if $x=fx$, then $f=1$, (since $U(L)$ is a domain) and hence $f\notin I$.
This completes the proof of Theorem \ref{Noetherianity of rees algebra}.
\end{proof}

\section{Main theorem}
Let $L$ be a finite dimensional Lie algebra, $U(L)$ its universal enveloping algebra, $I\subset U(L)$ the augmentation ideal. As in Subsection \ref{only if}, consider the ideal
$$L_\infty =\bigcap_nL_n\subset L$$
and the quotient nilpotent Lie algebra $L_{\nil}=L/L_\infty$.

Each cohomology space $H^i (L_\infty ,k)$ is naturally a
$L_{\nil}$-module, so we have the Hochschild-Serre spectral sequence \cite{HS}:
\begin{equation}\label{hochschild serre}
E_2^{pq}=H^p(L_{\nil},H^q(L_\infty ,k))\Rightarrow H^{p+q}(L,k).
\end{equation}

\begin{theorem} \label{main} Let $L$ be a finite dimensional Lie algebra over a field $k$. The following conditions are equivalent:

(1) The natural functor
$$\Phi _L: D^b((U(L)\modul)_I) \to D^b_I(U(L)\modul)$$
is an equivalence.

(2) The natural map $H^\bullet (L_{\nil},k)\to H^\bullet (L,k)$ is an isomorphism.

(3) The positive degree cohomology $H^{>0}(L_\infty ,k)$  considered as an $L_{\nil}$-module  has no subquotients isomorphic to the trivial module $k$.
\end{theorem}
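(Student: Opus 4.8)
The plan is to prove $(1)\Leftrightarrow(2)$ by identifying the relevant $\Ext$-groups, and then $(2)\Leftrightarrow(3)$ by analysing the Hochschild--Serre spectral sequence \eqref{hochschild serre}. For $(1)\Leftrightarrow(2)$ I would first observe that a finitely generated $I$-torsion $U(L)$-module $M$ is killed by some power $I^N$, hence by $\bigcap_n I^n=\ker\theta$ (Lemma \ref{nonempty intersection}), so $M$ is in fact an $\overline I$-torsion $U(L_{\nil})$-module; conversely every such module is $I$-torsion over $U(L)$. Thus $(U(L)\modul)_I$ and $(U(L_{\nil})\modul)_{\overline I}$ are literally the same abelian category, so their bounded derived categories agree. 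Since $L_{\nil}$ is nilpotent, Corollary \ref{spec cor for nilp} applied to $L_{\nil}$ gives $\Ext^i_{D^b((U(L)\modul)_I)}(k,k)\cong\Ext^i_{U(L_{\nil})}(k,k)=H^i(L_{\nil},k)$, while in the ambient category $\Ext^i_{D^b_I(U(L)\modul)}(k,k)=\Ext^i_{U(L)}(k,k)=H^i(L,k)$. The functor $\Phi_L$ induces precisely the inflation map along $U(L)\to U(L_{\nil})$, so (using the reduction to $\Ext(k,k)$ recorded in the introduction) $\Phi_L$ is an equivalence iff inflation $H^\bullet(L_{\nil},k)\to H^\bullet(L,k)$ is an isomorphism, which is $(2)$.

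The engine for $(2)\Leftrightarrow(3)$ is the lemma that for a nilpotent Lie algebra $\mathfrak n$ and a finite-dimensional module $V$ with \emph{no} trivial composition factor, $H^\bullet(\mathfrak n,V)=0$. I would prove this by induction on $\dim\mathfrak n$: choosing a central $0\ne z\in\mathfrak n$ and the Fitting decomposition $V=V'\oplus V''$ for $z$, the summand $V''$ (where $z$ is invertible) has vanishing cohomology because $z$ acts invertibly on $V''$ yet acts as zero on $H^\bullet(\mathfrak n,-)$ (Cartan's homotopy formula, the coadjoint action of the central $z$ being trivial); the summand $V'$ is filtered by $\ker z^j$ with successive quotients $\mathfrak n/(z)$-modules still having no trivial composition factor, to which the inductive hypothesis and the Hochschild--Serre sequence of $0\to(z)\to\mathfrak n\to\mathfrak n/(z)\to0$ apply. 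In \eqref{hochschild serre} this says the row $E_2^{\bullet,q}=H^\bullet(L_{\nil},H^q(L_\infty,k))$ vanishes exactly when $H^q(L_\infty,k)$ has no trivial $L_{\nil}$-composition factor. Hence $(3)$ is equivalent to the vanishing of all rows $q>0$, and $(3)\Rightarrow(2)$ is immediate: the sequence collapses onto the bottom row, whose edge map is the inflation.

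The delicate direction is $(2)\Rightarrow(3)$, where I must rule out that differentials could cancel nonzero higher rows while leaving the inflation an isomorphism; I expect this to be the main obstacle. I would argue the contrapositive by localising at the \emph{lowest} offending row. Assume $(3)$ fails and let $q_0>0$ be minimal with $H^{q_0}(L_\infty,k)$ having a trivial composition factor; by the lemma all rows $0<q<q_0$ vanish. A nonzero unipotent part forces nonzero invariants (Engel), so $E_2^{0,q_0}=H^{q_0}(L_\infty,k)^{L_{\nil}}\ne0$; and the vanishing of the intermediate rows forces the only differential meeting the corner $(0,q_0)$ to be $d_{q_0+1}\colon E_2^{0,q_0}\to E_2^{q_0+1,0}=H^{q_0+1}(L_{\nil},k)$, with no differential entering $(0,q_0)$ and $d_{q_0+1}$ the unique differential entering $(q_0+1,0)$. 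This produces a clean dichotomy: if $d_{q_0+1}=0$ then $E_\infty^{0,q_0}=E_2^{0,q_0}\ne0$ survives, so the bottom-row image in $H^{q_0}(L,k)$ is proper and inflation is not surjective in degree $q_0$; if $d_{q_0+1}\ne0$ then $E_\infty^{q_0+1,0}=\coker d_{q_0+1}$ is a proper quotient of $H^{q_0+1}(L_{\nil},k)$, so inflation is not injective in degree $q_0+1$. Either way $(2)$ fails, closing the cycle.

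The remaining points I expect to be routine: checking that $\Phi_L$ induces exactly the inflation map, and the bookkeeping of which differentials can reach the corners $(0,q_0)$ and $(q_0+1,0)$ (source columns must be nonnegative and source rows must avoid the vanishing band $0<q<q_0$). Note that this route needs only the vanishing half of the cohomology lemma, not a converse, since the nonvanishing of $E_2^{0,q_0}$ is supplied directly by Engel's theorem.
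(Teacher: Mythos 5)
Your proposal is correct, and its first half is essentially the paper's own argument: identifying $(U(L)\modul)_I$ with $(U(L_{\nil})\modul)_{\overline{I}}$ via Lemma \ref{nonempty intersection}, invoking Corollary \ref{spec cor for nilp} for the nilpotent quotient, and reducing the question to the inflation map on $\Ext^\bullet(k,k)$ is exactly how the paper proves $(1)\Leftrightarrow(2)$. The one thing you should not outsource to the introduction is why an isomorphism on $\Ext^\bullet(k,k)$ forces the functor to be an equivalence; the paper supplies this by noting that, by Engel's theorem, both triangulated categories in \eqref{comp of ext} are generated by the trivial module $k$, and that sentence belongs in your write-up. Where you genuinely diverge is in the two spectral-sequence ingredients, and both of your variants work. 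For the vanishing lemma you induct on $\dim\mathfrak{n}$ using a central element $z$ and the Fitting decomposition ($z$ acts invertibly on the coefficients of one summand but trivially on cohomology by Cartan's formula, and the null summand is handled through $\mathfrak{n}/(z)$); the paper's Lemma \ref{nontrivial trivial} instead takes a simple nontrivial $M$, passes to the one-dimensional quotient $\overline{\cL}$ cut out by $\chi_M$, shows every $\overline{x}$ acts on $H^\bullet(\cL',M)$ with characteristic polynomial a power of $(t-\chi_M(\overline{x}))$, and then uses Lie's theorem for d\'evissage; the two are equivalent in strength, with comparable implicit field hypotheses. For $(2)\Rightarrow(3)$ the paper localizes at the top column: with $d$ maximal such that $H^d(L_{\nil},k)\neq 0$, it shows $H^{>d}(L_{\nil},-)=0$, shows that a trivial subquotient of $N$ forces $H^d(L_{\nil},N)\neq 0$ (trivial subquotient $\Rightarrow$ trivial quotient by Engel, then the long exact sequence), and asserts that $E_2^{d,i}\neq 0$ survives; there the outgoing differentials die because all columns beyond $d$ vanish, but the incoming differentials into $(d,i)$ are controlled only if $i$ is chosen maximal among the bad rows, a choice the paper leaves implicit. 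Your argument localizes instead at the minimal bad row $q_0$ and the corner $(0,q_0)$: the vanishing band $0<q<q_0$ plus positional constraints leave $d_{q_0+1}\colon E^{0,q_0}\to E^{q_0+1,0}$ as the only possibly nonzero differential touching either corner, and your dichotomy (if it vanishes, inflation fails to be surjective in degree $q_0$; if not, it fails to be injective in degree $q_0+1$) closes the implication with full control of the bookkeeping. The trade-off: the paper's route is shorter and needs the top-degree nonvanishing $H^d(L_{\nil},N)\neq 0$, while yours needs only the nonvanishing of invariants $E_2^{0,q_0}\neq 0$ (trivial composition factor $\Rightarrow$ nonzero invariants, by Engel applied to the Fitting-null part) and is the more watertight of the two precisely at the one spot where the paper is terse.
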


\begin{proof} We first notice that the 3 conditions in the theorem hold in case $L$ is nilpotent. Indeed, then $L_\infty =0$, so (2) and (3) hold trivially. Also (1) holds by Corollary \ref{spec cor for nilp}.

Let now $L$ be general. As in section \ref{only if} we consider the short exact sequence of Lie algebras
$$0\to L_\infty \to L\to L_{\nil}\to 0$$
and the induced surjection $\theta :U(L)\to U(L_{\nil})$ with the kernel $\ker \theta =\bigcap_nI^n$ (Lemma \ref{nonempty intersection}). Also, as in the proof of Lemma \ref{nonempty intersection}, denote by $\overline{I}$ the augmentation ideal in $U(L_{\nil})$. Lemma \ref{nonempty intersection} implies that any $U(L)$-module $M$ such that $M=M_I$ is actually a $U(L_{\nil})$-module (and $M=M_{\overline{I}}$). Hence the functor of restriction of scalars
$$\theta _*:U(L_{\nil})\modul\to U(L)\modul$$
induces the equivalence of categories
$$(U(L_{\nil})\modul)_{\overline{I}}\stackrel{\sim}{\to}  (U(L)\modul)_I$$
and therefore the equivalence of categories
\begin{equation}\label{vert equiv}
D^b((U(L_{\nil})\modul)_{\overline{I}})\stackrel{\sim}{\to}
D^b((U(L)\modul)_I)
\end{equation}

We have the commutative diagram of functors
\begin{equation}\label{commut diag funct}
\xymatrix{
D^b((U(L)\modul)_I) \ar[r]^(0.53){\Phi _L}& D^b_I(U(L)\modul)\\
D^b((U(L_{\nil})\modul)_{\overline{I}}) \ar[u] \ar[r]^(0.53){\Phi _{L_{\nil}}}& D^b_{\overline{I}}(U(L_{\nil})\modul)\ar[u]}
\end{equation}
As explained above the left vertical arrow is an equivalence. Also $\Phi _{L_{\nil}}$ is an equivalence (Corollary \ref{spec cor for nilp}). Hence $\Phi _L$ is an equivalence if and only if the functor
\begin{equation}\label{comp of ext}
\theta _*:D^b_{\overline{I}}(U(L_{\nil})\modul)\to
D^b_{I}(U(L)\modul)
\end{equation}
is an equivalence.
Every finitely generated $I$-torsion $U(L)$-module (resp. $\bar I$-torsion $U(L_{\nil})$-module) is a finite dimensional  $k$-vector space
on which $U(L)$ (resp. $U(L_{\nil})$) acts nilpotently, so by Engel's theorem, it admits a stable flag with trivial $1$-dimensional quotients.
As triangulated categories, therefore,
both sides  of \eqref{comp of ext} are generated by the trivial module $k$, and so the functor  in
\eqref{comp of ext} is an equivalence if and only if the natural map
\begin{equation}\label{final comp of ext}
\theta _*:\Ext ^\bullet _{U(L_{\nil})}(k,k)\to
\Ext ^\bullet _{U(L)}(k,k)
\end{equation}
is an isomorphism. This proves the equivalence of conditions (1) and (2) in the theorem. It remains to prove the equivalence of (2) and (3).

First we prove a lemma.
Let $\cL$ be a nilpotent Lie algebra. By a theorem of Lie every simple $\cL$-module $M$ is one dimensional, hence it corresponds to an additive character
$$\chi _M:\cL/[\cL,\cL]\to k$$

\begin{lemma} \label{nontrivial trivial} Let $M$ be a simple nontrivial $\cL$-module. Then
$$H^\bullet (\cL,M)=0$$
\end{lemma}

\begin{proof} The character $\chi _M$ gives a short exact sequence of nilpotent Lie algebras
\begin{equation}\label{short exact nilp}
0\to \cL '\to \cL \stackrel{\chi _M}{\to }\overline{\cL}\to 0
\end{equation}
with $\dim \overline{\cL}=1$, where $\cL '$ acts trivially on $M$.

Recall \cite[(23.1)]{CE} the standard complex $C^\bullet (\cL ' ,M)$, which computes the cohomology $H^\bullet (\cL ',M)$.  With our normalization, it is
\begin{equation}\label{hs-complex}
0\to M=C^0(\cL ',M)\stackrel{\partial _0}{\to} C^1(\cL ' ,M)\stackrel{\partial _1}{\to } \cdots,
\end{equation}
where $C^n(\cL ',M):=\Hom _k(\bigwedge ^n\cL ' ,M)$ and
\begin{align*}
\partial _n(f)(x_0\wedge&\cdots \wedge x_n)  =  \sum _{i=0}^n(-1)^ix_if(x_0\wedge \cdots \wedge \hat{x}_i\wedge \cdots \wedge x_n)\\
& + \sum _{p<q}(-1)^{p+q}f([x_p,x_q]\wedge x_0\wedge \cdots \wedge \hat{x}_p\wedge \cdots \wedge \hat{x} _q\wedge \cdots \wedge x_n)
\end{align*}
The $\cL $-action on $\cL '$ and $M$ extends to an action on the complex $C^\bullet (\cL ',M)$ by the formula
\begin{equation} \label{action on complex}
\begin{split}
(xf)(x_1\wedge \cdots \wedge x_n) & =  xf(x_1\wedge \cdots \wedge x_n)\\
                               &  -\sum _{i=1}^nf(x_1\wedge \cdots \wedge x_{i-1}\wedge [x,x_i]\wedge x_{i+1}\wedge \cdots \wedge x_n)
\end{split}
\end{equation}
This induces the $\cL$-action on the cohomology $H^\bullet (\cL ',M)$
which is trivial on $\cL '$ and hence gives the required  $\overline{L}$-action on $H^\bullet (\cL ',M)$.

We now take a closer look at the $\cL$-action \eqref{action on complex} on $C^n(\cL ',M)$. Since the $\cL$-action on the Lie algebra $\cL '$ is nilpotent there exists a basis of $\cL '$ such that for every $x\in \cL $ the matrix of the operator $[x,-]$ in this basis is strictly lower triangular. Then it follows from the formula
\eqref{action on complex} that there exists a basis for $C^n(\cL ',M)$ such that the action of every $x\in \cL$ is given by a lower triangular matrix with all diagonal entries being $\chi _M(x)$.

Therefore any $\overline{x}\in \overline{\cL}$ acts on the cohomology $H^\bullet (\cL ',M)$ by an operator whose   characteristic polynomial is a power of $(t-\chi _M(\overline{x}))$.
Since we assume that $\chi _M(\overline{x})\neq 0$ for $\overline{x}\neq 0$, it easily follows that
$$\Ext ^\bullet _{U(\overline{\cL})}(k,H^\bullet (\cL ',M))=H^\bullet (\overline{\cL},H^\bullet (\cL ',M))=0$$
Now the Hochschild-Serre spectral sequence
$$E_2^{pq}=H^p(\overline{\cL }, H^q(\cL ',M))\Rightarrow H^{p+q}(\cL ,M)$$
implies that $H^\bullet (\cL ,M)=0$, which proves the lemma.
\end{proof}

We now return to the proof of the equivalence of conditions (2) and (3) in the theorem. The Hochschild-Serre spectral sequence \eqref{hochschild serre}
has $E_2$ page
\begin{equation}\label{e2 page}
\xymatrix{
E_2^{01}\ar[drr] & E_2^{11}\ar[drr]& \cdots&\cdots\\
E_2^{00} & E_2^{10} & E_2^{20} & \cdots \\
}
\end{equation}
We have $H^0(L_\infty ,k)=k$ -- the trivial $L_{\nil}$-module and the graded space $\Ext ^\bullet _{U(L_{\nil})}(k,k)$ identifies naturally with the bottom row of this spectral sequence. The map $\theta ^* :\Ext ^\bullet _{U(L_{\nil})}(k,k)\to
\Ext ^\bullet _{U(L)}(k,k)$ then coincides with the projection
$$\Ext ^\bullet _{U(L_{\nil})}(k,k)=H^\bullet (L_{\nil},H^0(L_\infty ,k))\to H^\bullet (L,k)$$

Notice that by a theorem of Lie every finite dimensional $L_{\nil}$-module has a filtration with 1-dimensional subquotients.

Assume that the condition (3) holds, i.e. the $L_{\nil}$-module $H^{>0}(L_{\infty},k)$ has no subquotients isomorphic to the trivial module $k$. Then all the  $1$-dimensional subquotients of the $L_{\nil}$-module $H^{>0}(L_{\infty},k)$ are nontrivial. In this case it follows from Lemma \ref{nontrivial trivial} that only the bottom row of the spectral sequence \eqref{e2 page} is nonzero. Therefore the natural map $H^\bullet (L_{\nil},k)\to H^\bullet (L,k)$ is an isomorphism, i.e. the condition (2) of the theorem holds.

Assume, conversely, that condition (2) holds.  Let $d$ be the maximal integer such that $H^d(L_{\nil},k)\neq 0$. Again using Lie's theorem and Lemma \ref{nontrivial trivial} it follows that
$H^{>d}(L_{\nil},N)=0$ for any finite dimensional $L_{\nil}$-module $N$.  If $k$ is a subquotient of $N$, then by Engel's theorem, it is also a quotient of $N$.
It follows, therefore, that if $N$ has the trivial module as a  subquotient, then $H^d(L_{\nil},N)\neq 0$. So if for some $i>0$ the $L_{\nil}$-module $H^i(L_\infty ,k)$ has the trivial submodule $k$ as a subquotient then
$E_2^{d,i}=H^d(L_{\nil}, H^i(L_\infty ,k))$ is nonzero, which means that it survives in $H^{i+d}(L,k)$. This is a contradiction and finishes the proof of the theorem.
\end{proof}

\subsection{Some examples}

(A) Consider the 2-dimensional Lie algebra with basis $x,y$ and the relation $[x,y]=y$. This Lie algebra is solvable but not nilpotent, $L_\infty=ky$. The standard complex, which computes the cohomology $H^\bullet (L_\infty,k)$ has terms in degrees $0$ and $1$ and zero differential
$$k\stackrel{0}{\to }\Hom _k(ky,k)$$
The element $x\in L_{\nil}$ acts on the space $\Hom _k(ky,k)=H^1(L_\infty,k)$ as minus the identity (see formula \eqref{action on complex}), hence the condition (3) of Theorem \ref{main} is satisfied.

(B) This is a generalization of example (A) above: assume that the $L_{\nil}$-module $\bigwedge ^iL_\infty$ has no subquotients isomorphic to the trivial module $k$ if $i>0$. Then the condition (3) of Theorem \ref{main} holds. For example this is the case when $L$ is the Lie algebra of upper-triangular matrices. Then $L_\infty$ is the ideal of strictly triangular matrices and $L_{\nil}$ is the abelian quotient.

(C) However, there exists solvable algebras $L$ for which condition (3) does not hold.  See Proposition~\ref{solvable-but-non-L} below.

(D) It may happen that condition (3) holds even though $\bigwedge^{>0} L_\infty$ admits $k$ as an $L_{\nil}$-subquotient.  See Proposition~\ref{amazing-L} below.

For the next two examples, we assume that $k$ has characteristic zero.

(E) Assume that $L_\infty \neq 0$ is semi-simple.  If $k=\R$ and $L_\infty$ is compact, the cohomology $H^\bullet (L_\infty ,k)$ is isomorphic to the cohomology of any compact Lie group with the Lie algebra $L_\infty$  \cite[Theorem~15.2]{CE}, so $H^{>0}(L_\infty ,k)\neq 0$.  Since every compact semisimple Lie algebra has a compact real form, $H^{>0}(L_\infty ,k)\neq 0$ when $k=\C$ and $L_\infty$ is semisimple, the same statement follows for every Lie algebra over any field $k$ of characteristic zero.
However for any $x\in L_{\nil}$ the operator $[x,-]$ on of $L_\infty$ is a derivation, so is inner.
Therefore the action of $L_{\nil}$ of the cohomology is trivial and so the condition (3) of Theorem \ref{main} fails.

(F) This is a generalization of example (E) above. We formulate it as a proposition.

\begin{prop}
Assume that the equivalent conditions of Theorem \ref{main} are satisfied and the characteristic of $k$ is zero.  Then the algebra $L_\infty $ is solvable.
\end{prop}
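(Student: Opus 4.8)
The plan is to prove the contrapositive: assuming $L_\infty$ is \emph{not} solvable, I will exhibit a copy of the trivial $L_{\nil}$-module inside $H^{>0}(L_\infty,k)$, contradicting condition (3) of Theorem \ref{main}. Since $\mathrm{char}\,k=0$, the Levi--Malcev theorem gives a decomposition $L_\infty=\mathfrak{s}\oplus\mathfrak{r}$, where $\mathfrak{r}$ is the radical and $\mathfrak{s}$ is a semisimple Levi subalgebra; non-solvability means $\mathfrak{s}\neq 0$, equivalently the semisimple quotient $\bar{\mathfrak{s}}:=L_\infty/\mathfrak{r}$ is nonzero. The first point I would record is that $\mathfrak{r}$, being characteristic in $L_\infty$, is preserved by every derivation of $L_\infty$; since $L_\infty\triangleleft L$, the adjoint action of $L$ restricts to derivations of $L_\infty$ that preserve $\mathfrak{r}$, so $\mathfrak{r}\triangleleft L$ and the projection $\pi\colon L_\infty\to\bar{\mathfrak{s}}$ is $L$-equivariant. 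Consequently all the cohomology maps below respect the $L$-action, which factors through $L_{\nil}$.

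Next I would analyze the inflation map $\pi^\ast\colon H^\bullet(\bar{\mathfrak{s}},k)\to H^\bullet(L_\infty,k)$, which is $L$-equivariant by functoriality. Two facts make it decisive. First, $L$ acts trivially on $H^\bullet(\bar{\mathfrak{s}},k)$: each $x\in L$ acts on $\bar{\mathfrak{s}}$ by a derivation, which is inner because $\bar{\mathfrak{s}}$ is semisimple, and an inner derivation $\mathrm{ad}(s)$ induces on $H^\bullet(\bar{\mathfrak{s}},k)$ the Lie derivative $\mathcal L_s=\iota_s d+d\,\iota_s$, which is chain-homotopic to zero---this is exactly the mechanism by which a Lie algebra acts trivially on its own cohomology. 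Second, $\pi^\ast$ is injective: composing with restriction to the Levi subalgebra $\mathfrak{s}$ gives $\res\circ\pi^\ast=(\pi|_{\mathfrak{s}})^\ast$, and $\pi|_{\mathfrak{s}}\colon\mathfrak{s}\xrightarrow{\sim}\bar{\mathfrak{s}}$ is an isomorphism of Lie algebras, so $(\pi|_{\mathfrak{s}})^\ast$ is an isomorphism and $\pi^\ast$ has a left inverse. Note that this second step uses the Levi subalgebra only as an auxiliary splitting; I do not need $\mathfrak{s}$ itself to be $L$-stable.

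Finally, as in example (E) above, a nonzero semisimple Lie algebra in characteristic zero has $H^{>0}(\bar{\mathfrak{s}},k)\neq 0$ (for instance $H^3\neq 0$). Combining the three facts, $\pi^\ast$ embeds this nonzero graded space as a trivial $L_{\nil}$-submodule of $H^{>0}(L_\infty,k)$, so condition (3) fails, which proves the proposition. I expect the genuine obstacle to be the injectivity of inflation: a priori the Hochschild--Serre spectral sequence for $\mathfrak{r}\triangleleft L_\infty$ could kill the bottom-row classes $H^\bullet(\bar{\mathfrak{s}},k)$ by incoming differentials, and its Euler characteristic carries no information here, since Lie algebra cohomology has vanishing Euler characteristic in positive dimension. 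The restriction-to-Levi argument is precisely what sidesteps this by forcing the relevant edge map to split off; everything else is standard characteristic-zero structure theory.
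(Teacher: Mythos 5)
Your proof is correct, and it reaches the crucial lemma by a genuinely different, more elementary route than the paper. Both arguments share the same outer structure: argue by contrapositive via condition (3), use the characteristic-zero Levi decomposition of $L_\infty$ with radical $\mathfrak{r}$ and semisimple quotient $\bar{\mathfrak{s}}=L_\infty/\mathfrak{r}$, observe that $L$ acts trivially on $H^\bullet(\bar{\mathfrak{s}},k)$ (derivations of a semisimple algebra are inner, and inner derivations act trivially on cohomology), and invoke $H^{>0}(\bar{\mathfrak{s}},k)\neq 0$ for $\bar{\mathfrak{s}}\neq 0$. The divergence is in proving injectivity of the inflation $\pi^*\colon H^\bullet(\bar{\mathfrak{s}},k)\to H^\bullet(L_\infty,k)$. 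The paper does this with derived-category machinery: using the Levi splitting $s\colon\bar{\mathfrak{s}}\to L_\infty$ it builds an explicit functorial projective resolution and shows the identity functor is a direct summand of $Lp^*\circ p_*$, whence $\Ext^\bullet_{U(\bar{\mathfrak{s}})}(k,k)\to\Ext^\bullet_{U(L_\infty)}(k,k)$ is injective. You instead work at the cochain level: by contravariant functoriality of cohomology with trivial coefficients, $\res\circ\pi^*=(\pi|_{\mathfrak{s}})^*$ is an isomorphism, so $\pi^*$ is split injective. Your version is shorter and avoids homological machinery, but it is tied to trivial coefficients (the composite $\res\circ\pi^*$ only collapses this way because the coefficient module is pulled back along $\pi$); the paper's lemma is stronger, splitting the adjunction for arbitrary modules. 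You are also more explicit than the paper on a point both proofs need: that $\mathfrak{r}$ is characteristic in $L_\infty$, hence an ideal of $L$, which is what makes $\pi$ (and so $\pi^*$) equivariant and guarantees the image of $\pi^*$ is a trivial $L_{\nil}$-submodule of $H^{>0}(L_\infty,k)$; your remark that the Levi subalgebra $\mathfrak{s}$ need not be $L$-stable and serves only as an auxiliary splitting is exactly the right caution.
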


\begin{proof} By the general theory the Lie algebra $L_\infty $ has a maximal solvable ideal (the radical) $L_\infty ^{\rad}$ such that
$$L_\infty ^{\ss}:= L_\infty /L_\infty ^{\rad}$$
is semi-simple. Put $\mathfrak{g}:=L_\infty ^{\ss}$. We need to prove that
$\mathfrak{g}=0$.

\begin{lemma} In the above notation the natural map
$$H^\bullet (\mathfrak{g},k)=\Ext^\bullet  _{U(\mathfrak{g})}(k,k)\to
\Ext ^\bullet _{U(L_\infty)}(k,k)=H^\bullet (L_\infty ,k)$$
is injective.
\end{lemma}

\begin{proof} By the Levi theorem we know that the surjection of Lie algebras $p:L_\infty \to \mathfrak{g}$ has a splitting $s:\mathfrak{g}\to L_\infty$. The corresponding surjection of universal enveloping algebras $p:U(L_\infty )\to U(\mathfrak{g})$ induces the pair of adjoint functors $(Lp*,p_*)$ -- the extension and restriction of scalars between the derived categories $D^b(U(L_\infty )\modul)$ and $D^b(U(\mathfrak{g})\modul)$, where $A\modul$ is the category of (all) left modules over an associative ring $A$. It suffices to prove that the adjunction morphism of functors
$$\Id_{D^b(U(\mathfrak{g})\modul)}\to Lp^*\cdot p_*$$
has a left inverse, i.e. $\Id_{D^b(U(\mathfrak{g})\modul)}$ is a direct summand of the functor $Lp^*\cdot p_*$.

Let $S^\bullet$ be an object in $D^b(U(\mathfrak{g})\modul)$.
We may assume that $S^\bullet$ consists of projective $U(\mathfrak{g})$-modules.
Let us construct a special (functorial) projective resolution of $p_*S^\bullet$.
The morphism $s:\mathfrak{g}\to L_\infty$ gives a homomorphism $s:U(\mathfrak{g})\to U(L_\infty)$ such that $p\cdot s=\id$. So we may consider $U(L_\infty)$ as a (free) right $U(\mathfrak{g})$-module via the homomorphism $s$. Consider the obvious short exact sequence of complexes of $U(L_\infty)$-modules
 $$0\to K^\bullet \to U(L_\infty)\otimes _{U(\mathfrak{g})}S^\bullet \to p_*S^\bullet \to 0$$
Now we repeat this procedure with $K^\bullet $ instead of $S^\bullet$ (by first considering $K^\bullet$ as a complex of $U(\mathfrak{g})$-modules via the map $s$) and so on. Eventually we obtain the complex of (complexes of projective) $U(L_\infty)$-modules
$$P^\bullet:= \cdots\stackrel{\partial _1}{\to} U(L_\infty)\otimes _{U(\mathfrak{g})}K^\bullet \stackrel{\partial _0}{\to} U(L_\infty )\otimes _{U(\mathfrak{g})}S^\bullet $$
which is a resolution of $p_*S^\bullet$ and hence
$$Lp^*\cdot p_*S^\bullet =U(\mathfrak{g})\otimes _{U(L_\infty)}P^\bullet$$

Note that by construction of $P^\bullet$, the map $U(\mathfrak{g})\otimes _{U(L_\infty)}\partial _0$ is zero.
Hence $U(\mathfrak{g})\otimes _{U(L_\infty)}(U(L_\infty)\otimes
_{U(\mathfrak{g})} S^\bullet)=S^\bullet$ is a direct summand of $Lp^*\cdot p_*S^\bullet$, which proves the lemma.
\end{proof}
Now the assertion of the proposition follows from the example (E) above. Indeed, if $\mathfrak{g}\neq 0$, then $H^{>0}(\mathfrak{g},k)\neq 0$. As explained in example (E) the $L_{\nil}$ action on $H^{>0}(\mathfrak{g},k)\neq 0$ is trivial, so by the above lemma the space $H^{>0}(L_\infty ,k)$ contains a nonzero $L_{\nil}$-submodule, which is trivial. This contradicts condition (3) of Theorem \ref{main}.
\end{proof}

\begin{prop}
\label{solvable-but-non-L}
There exist solvable Lie algebras $L$ not satisfying the equivalent conditions of Theorem~\ref{main}.
\end{prop}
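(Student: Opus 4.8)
The plan is to exhibit an explicit low-dimensional example rather than argue abstractly. I would take the $3$-dimensional Lie algebra $L$ with basis $x,y_1,y_2$ and brackets $[x,y_1]=y_1$, $[x,y_2]=-y_2$, $[y_1,y_2]=0$, that is, the semidirect product $kx\ltimes V$ where $V=ky_1\oplus ky_2$ is abelian and $x$ acts by the diagonal operator $A=\mathrm{diag}(1,-1)$. First I would verify the Jacobi identity, which is automatic for a semidirect product of $k$ acting on an abelian ideal, and observe that $L$ is solvable since $[L,L]\subseteq V$ and $[V,V]=0$.

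Next I would compute the lower central series. Because $A$ is invertible, $L_2=[L,L]=AV=V$, and hence $L_n=V$ for all $n\ge 2$; therefore $L_\infty=V$ and $L_{\nil}=L/L_\infty=kx$. The reason for insisting that $A$ be invertible is precisely to force $L_\infty$ to be all of $V$: a nilpotent choice of $A$ would collapse the lower central series and give $L_\infty=0$, i.e.\ a nilpotent $L$ for which the conditions of Theorem~\ref{main} automatically hold.

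Then I would compute $H^\bullet(L_\infty,k)=H^\bullet(V,k)$ as an $L_{\nil}$-module. Since $V$ is abelian the Chevalley--Eilenberg differential vanishes, so $H^n(V,k)=\bigwedge^n V^*$, and the $L_{\nil}=kx$-action is the one induced by \eqref{action on complex}, namely minus the contragredient of $\mathrm{ad}_x\vert_V$. In top degree $x$ acts on the line $H^2(V,k)=\bigwedge^2 V^*$ by the scalar $-\Tr(A)=-(1+(-1))=0$, so $H^2(L_\infty,k)\cong k$ is the \emph{trivial} $L_{\nil}$-module. Thus $H^{>0}(L_\infty,k)$ contains the trivial module as a subquotient, so condition (3) of Theorem~\ref{main} fails, while $L$ is solvable, as required.

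The construction has essentially no hard step; the only thing to get right is the interplay of two constraints on the operator $A$: it must be invertible, so that $L_\infty=V$, and it must have trace zero, so that the top exterior power $\bigwedge^{\dim V}V^*$ --- a natural positive-degree cohomology class --- carries the trivial character of $L_{\nil}$. The matrix $\mathrm{diag}(1,-1)$ satisfies both conditions over any field (in characteristic $2$ it is the identity, whose trace is still $2=0$), which is why I would use it; this simultaneously realizes the trivial module $k$ as a subquotient of $H^{>0}(L_\infty,k)$ and keeps $L$ solvable.
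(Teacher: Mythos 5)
Your proof is correct, and it succeeds by the same underlying mechanism as the paper's: arrange for $L_{\nil}$ to act with trace zero on $L_\infty$, so that the one-dimensional top piece $\bigwedge^{\dim L_\infty}L_\infty^*$ of the Chevalley--Eilenberg complex survives to cohomology as the \emph{trivial} $L_{\nil}$-module, violating condition (3). But your route to this is genuinely simpler than the paper's. The paper argues in two steps: first a general criterion --- for any nilpotent Lie algebra $N$ the top differential $C^{\dim N-1}(N,k)\to C^{\dim N}(N,k)$ vanishes, so condition (3) fails whenever $L_\infty$ is nilpotent and $L_{\nil}$ acts trivially on $\bigwedge^{\dim L_\infty}L_\infty$ --- and then a five-dimensional matrix example (upper-triangular $3\times 3$ matrices with equal corner entries), in which $L_\infty$ is the Heisenberg algebra and triviality of the action on $\bigwedge^3 L_\infty$ is deduced from the $L_{\nil}$-equivariance of the commutator pairing $L_\infty/Z\times L_\infty/Z\to Z$. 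You instead take the minimal three-dimensional example $L=kx\ltimes V$ with $V$ abelian and $A=\mathrm{diag}(1,-1)$: because $V$ is abelian, the entire Chevalley--Eilenberg differential vanishes, so $H^\bullet(L_\infty,k)=\bigwedge^\bullet V^*$ with no further argument, and the only computation left is that $x$ acts on the line $\bigwedge^2V^*$ by $-\Tr(A)=0$. Your version is lower-dimensional, avoids any unimodularity-type lemma, works verbatim in every characteristic (a point the paper does not address), and your isolation of the two constraints on $A$ --- invertibility, forcing $L_\infty=V$, and trace zero, killing the character on the top line --- explains cleanly why such examples exist. What the paper's two-step formulation buys in exchange is a reusable criterion that applies to any solvable $L$ whose $L_\infty$ is nilpotent but not necessarily abelian, where the cohomology $H^{>0}(L_\infty,k)$ is no longer the full exterior algebra and a direct computation like yours would be harder.
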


\begin{proof}
For a nilpotent Lie algebra $N$ acting on the trivial module $k$, the top differential,  $C^{\dim N-1}(N,k)\to C^{\dim N}(N,k)$, of the Chevalley-Eilenberg complex is always zero.  Therefore, if $L_\infty$ is nilpotent
and $L_{\nil}$ acts trivially on $\bigwedge^{\dim L_\infty} L_\infty$, then condition (3) of Theorem~\ref{main} is violated.

Let $L$ be the subalgebra of upper-triangular $3\times 3$ matrices such that the upper left and lower right entries are the same.  Thus $L_\infty$ consists of strictly upper triangular matrices, and its center $Z$ consists of matrices which are zero except possibly in the upper right entry.  The commutator map $L_\infty/Z\times L_\infty/Z\to Z$ respects the action of $L_{\nil}$, which is trivial on $Z$.
This implies that $\bigwedge^2 (L_\infty/Z)\cong k$ as $L_{\nil}$-module, and therefore $\bigwedge^3 L_\infty\cong k$ as $L_{\nil}$-module.
\end{proof}

\begin{prop}
\label{amazing-L}
The conditions of Theorem~\ref{main} are strictly weaker than the condition
that $(\bigwedge^{>0}L^*_\infty)$ has a non-trivial $L_{\nil}$-invariant subquotient.
\end{prop}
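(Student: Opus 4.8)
The implication in one direction is already available: Example~(B) shows that if $\bigwedge^{>0}L_\infty$ has no trivial $L_{\nil}$-subquotient then condition~(3) of Theorem~\ref{main} holds, and since $\bigwedge^i L_\infty^*\cong(\bigwedge^i L_\infty)^*$ carries a trivial subquotient exactly when $\bigwedge^i L_\infty$ does, the same holds with $L_\infty^*$. So the plan is to prove \emph{strictness}, i.e.\ to exhibit one Lie algebra $L$ for which condition~(3) holds while $\bigwedge^{>0}L_\infty$ nevertheless admits $k$ as an $L_{\nil}$-subquotient; this is the assertion of Example~(D).

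First I would turn this into a question about gradings. I take $L=L_\infty\rtimes kt$, where $L_\infty$ is nilpotent and $t$ acts by a semisimple \emph{invertible} derivation $D$. Invertibility forces $[L,L]=L_\infty$ and $[L,L_\infty]=L_\infty$, so $\bigcap_n L_n=L_\infty$ and $L_{\nil}=kt$; as $kt$ then acts semisimply on each $\bigwedge^i L_\infty$ and on $H^\bullet(L_\infty,k)$, a trivial subquotient is the same as a weight-$0$ vector. The goal becomes a nilpotent $L_\infty$ with an invertible diagonal derivation $D$ such that $\bigwedge^{>0}L_\infty$ has a weight-$0$ vector while $H^{>0}(L_\infty,k)$ has none.

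Concretely I would take $L_\infty$ the $4$-dimensional filiform algebra with basis $x,y,z,w$ and only nonzero brackets $[x,y]=z$, $[x,z]=w$; every diagonal $D$ with weights $a,b,a+b,2a+b$ on $x,y,z,w$ is then a derivation. The Chevalley--Eilenberg differential is independent of the grading: $dx^*=dy^*=0$, $dz^*=-x^*\wedge y^*$, $dw^*=-x^*\wedge z^*$, and a short computation gives Betti numbers $1,2,2,2,1$, with $H^2=\langle x^*\wedge w^*,\,y^*\wedge z^*\rangle$ and $H^3=\langle x^*\wedge z^*\wedge w^*,\,y^*\wedge z^*\wedge w^*\rangle$. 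The weights occurring in $H^{>0}$ are thus $-a,-b$; $-(3a+b),-(a+2b)$; $-(4a+2b),-(3a+3b)$; $-(4a+3b)$, while $z\wedge w\in\bigwedge^2 L_\infty$ has weight $(a+b)+(2a+b)=3a+2b$. Choosing $(a,b)=(2,-3)$ makes $z\wedge w$ a weight-$0$ vector, yet the seven cohomology weights become $-2,3,-3,4,-2,3,1$, none equal to $0$, so condition~(3) holds. Equivalently, with these weights the weight-$0$ part of the Chevalley--Eilenberg complex is $k$ in degree $0$ together with the acyclic two-term complex $\langle z^*\wedge w^*\rangle\xrightarrow{\,d\,}\langle x^*\wedge y^*\wedge w^*\rangle$ in degrees $2,3$, so $z\wedge w$ is invisible to cohomology.

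The hard part is precisely finding weights that separate $\bigwedge^{>0}$ from $H^{>0}$, and the reason it is delicate is the unimodularity of the nilpotent algebra $L_\infty$. Poincar\'e duality identifies $H^{\dim L_\infty}(L_\infty,k)=\bigwedge^{\dim L_\infty}L_\infty^*$, of $L_{\nil}$-weight $-\Tr D$, and matches the weight-$0$ part of $H^i$ with the weight-$(-\Tr D)$ part of $H^{\dim L_\infty-i}$; in particular condition~(3) already forces $\Tr D\neq0$. This is why the naive candidates fail: a Heisenberg $L_\infty$ has, for every grading, a weight-$0$ class in $\bigwedge^{>0}$ if and only if it has one in $H^{>0}$, and a symmetric $\pm$ grading gives $\Tr D=0$. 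For the filiform, reading condition~(3) as a finite list of linear non-vanishing conditions in $(a,b)$ shows that the \emph{only} subset-sum of $\{a,b,a+b,2a+b\}$ not forced to be nonzero is $(a+b)+(2a+b)=3a+2b$; setting it to $0$ is the unique way to create cancellation in $\bigwedge^\bullet$ without creating it in $H^\bullet$, and $(a,b)=(2,-3)$ realizes this.
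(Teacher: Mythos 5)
Your proposal is correct and is essentially the paper's own proof: your $4$-dimensional filiform algebra with brackets $[x,y]=z$, $[x,z]=w$ together with the diagonal derivation of weights $(2,-3,-1,1)$ is exactly the paper's example $L=M\oplus kt$, where $M$ is presented there as a bigraded quotient of the free Lie algebra on two generators and $t$ acts by $2a-3b$ on bidegree $(a,b)$. The only difference is presentational: the paper isolates just the weight-zero (i.e.\ bidegree-$(3,2)$) part of $\bigwedge^{\bullet}M^*$, spanned by $z^*\wedge w^*$ and $x^*\wedge y^*\wedge w^*$, and notes that $\delta$ carries one onto the other, whereas you compute the full cohomology with all its weights and also explain (via the parametrization by $(a,b)$ and the trace/duality constraint) why this choice is essentially forced --- the underlying computation is the same.
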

\begin{proof}
As $H^{>0}(L_\infty,k)$ is a subquotient of $\bigwedge^{>0} L_\infty^*$,
if the former has a non-trivial $L_{\nil}$-invariant subquotient, the latter does as well.

We show that converse does not hold by exhibiting a case in which $L_{\nil}$ acts semisimply on
$\bigwedge^\bullet L_\infty^*$ and therefore on every $L_{\nil}$-stable subquotient and for which
$$\dim (\bigwedge\nolimits^\bullet L_\infty^*)^{L_{\nil}} > 1 = \dim H^\bullet(L_\infty,k)^{L_{\nil}}.$$

The free Lie algebra on two generators $x$ and $y$ admits a unique bigrading for which $x$ and $y$ have bidegree $(1,0)$ and $(0,1)$ respectively.
Let $M$ be the quotient of this algebra by the graded ideal generated by all elements of total degree $\ge 4$ and also $[[x,y],y]$.
Then $M$ has basis: $x, y, z, w$ of bidegree $(1,0)$, $(0,1)$, $(1,1)$, and $(2,1)$ respectively, satisfying the following relations:
$$[x,y]=z,\,[x,z]=w,\,[x,w]=[y,z]=[y,w]=[z,w]=0$$
(see \cite[II, \S2, no.\ 11, Th\'eor\`eme 1]{Bourbaki} and the computation of the Hall set for $2$ generators given at the end of no.\ 10.)

We define $t$ to be the derivation which acts on the bidegree $(a,b)$ part of $M$ by $2a-3b$.
Let $L := M\oplus k t$ denote the semi-direct sum, so
$$[t,x] = 2x,\,[t,y] = -3y,\,[t,z] = -z,\,[t,w] = w.$$
We confirm that $[L,L] = [L,M] = M$, so $L_\infty = M$, and $L_{\nil}$ is the $1$-dimensional algebra spanned by the class of $t$.

Next, we consider the Chevalley-Eilenberg complex of $M$.  The underlying graded space is $\bigwedge^\bullet M^*$, which is spanned by wedge products of the dual basis
$x^*,y^*,z^*,w^*$ of $M$.  The differential is given by
$$\delta(x^*) = 0,\,\delta(y^*) = 0,\,\delta(z^*) = y^* \wedge x^*,\,\delta(w^*) = z^*\wedge x^*.$$
The bigrading on $M$ induces a bigrading on $\bigwedge^\bullet M^*$, and $\delta$ preserves bidegree.  The degree $(3,2)$-part of $\bigwedge^\bullet M^*$ is spanned by $z^*\wedge w^*$ and $x^*\wedge y^*\wedge w^*$.  As
$$\delta(z^*\wedge w^*) = -x^*\wedge y^*\wedge z^*,$$
the degree $(3,2)$-part of $H^*(L_\infty,k)$ is zero.  On the other hand, the $t$-invariant part of
$H^*(L_\infty,k)$ is the sum of the $(3n,2n)$-part over all integers $n$.

Now $H^*(L_\infty,k)$ is a subquotient of $\bigwedge^\bullet M^*$, and the latter has non-trivial degree $(3n,2n)$-part only for $n=0,1$.  Thus, $\dim (\bigwedge^\bullet L_\infty)^{L_{\nil}} = 3$ but
$$\dim H^\bullet(L_\infty,k)^{L_{\nil}} = 1.$$
\end{proof}

\section{An application}

Let $k$ be an algebraically closed field of characteristic zero. Let $V$ be a linear unipotent algebraic group over $k$, $L=\Lie V$ the corresponding nilpotent Lie algebra.

Denote by $V\text{-Mod}$ the abelian category of rational representations of $V$. Recall that an object of $V\modul$ is by definition a $V$-module $M$ which is a union of finite dimensional submodules $M_i$, such that the $V$-action on $M_i$ comes from a homomorphism of $k$-algebraic groups $V\to GL(M_i)$. In particular every element of $V$ acts on $M_i$ via a unipotent operator.

Notice that we have a natural equivalence of abelian categories
$$\log :V\text{-Mod} \to (U(L)\text{-Mod})_I$$
where $(U(L)\text{-Mod})_I$ is the abelian category of (all) $U(L)$-modules which are $I$-torsion.

This induces the equivalence of derived categories
\begin{equation}\label{derived log} \log :D^b(V\text{-Mod}) \to D^b((U(L)\text{-Mod})_I)
\end{equation}

Recall that for $M\in V\modul$ its cohomology is by definition
$$H^\bullet _V (M):=\Ext ^\bullet _{V\text{-Mod}}(k,M)$$
where $k$ is the trivial rational $V$-module.

\begin{cor} \label{appl to coh} For any $M\in V\text{-Mod}$ we have the isomorphism
\begin{equation}\label{equality of cohomology} H^\bullet _V(M)\simeq H^\bullet (L,\log(M))
\end{equation}
In particular, the cohomology $H^\bullet _V(M)$ can be computed using the
standard complex for the Lie algebra $L$.
\end{cor}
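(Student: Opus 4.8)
The plan is to propagate the chain of equivalences already established and then reduce the question to the finitely generated situation covered by Corollary~\ref{spec cor for nilp}. First I would use the equivalence of abelian categories $\log$. Since $\log$ carries the trivial $V$-module to the trivial $U(L)$-module $k$, it induces for every $i$ an isomorphism
\[
H^i_V(M)=\Ext^i_{V\text{-Mod}}(k,M)\isom\Ext^i_{(U(L)\text{-Mod})_I}(k,\log M),
\]
the right-hand side being the Ext computed inside the abelian category of $I$-torsion modules. Thus the corollary reduces to the assertion that for any $I$-torsion module $N$ the natural map
\[
\Ext^\bullet_{(U(L)\text{-Mod})_I}(k,N)\longrightarrow \Ext^\bullet_{U(L)\text{-Mod}}(k,N)=H^\bullet(L,N)
\]
is an isomorphism; the equality on the right is the standard identification of Lie algebra cohomology with $\Ext_{U(L)}(k,-)$ via the Chevalley--Eilenberg resolution $U(L)\otimes_k\bigwedge^\bullet L\to k$, valid for arbitrary coefficient modules.

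Next I would reduce this comparison to finite-dimensional modules. Every $I$-torsion module $N$ (like every rational $V$-module) is the filtered union of its finite-dimensional submodules $N_\alpha$, and both sides commute with filtered colimits in $N$: the right-hand side because $k$ admits the finite free resolution $U(L)\otimes_k\bigwedge^\bullet L$, and the left-hand side because $k$ is a noetherian object of the locally noetherian Grothendieck category $(U(L)\text{-Mod})_I$. Hence it suffices to treat a finite-dimensional torsion module $N$.

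For such $N$ the left-hand Ext agrees with the Ext computed in the subcategory $(U(L)\modul)_I$ of finitely generated torsion modules, and Corollary~\ref{spec cor for nilp}---which applies because $L=\Lie V$ is nilpotent---identifies the latter with $\Ext^\bullet_{U(L)\modul}(k,N)$ through the equivalence $\Phi_L$ (using that $D^b_I(U(L)\modul)$ is a full subcategory of $D^b(U(L)\modul)$); this in turn equals $\Ext^\bullet_{U(L)\text{-Mod}}(k,N)$ since $k$ and $N$ are finitely generated. Assembling the isomorphisms and passing back to the colimit yields \eqref{equality of cohomology}, and the final assertion follows because $H^\bullet(L,\log M)$ is computed by the standard complex $\Hom_k(\bigwedge^\bullet L,\log M)$.

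The main obstacle is precisely this bookkeeping of Ext-groups across the different ambient categories: the finitely generated hypothesis built into $\Phi_L$ and Corollary~\ref{spec cor for nilp} must be matched against the possibly infinite-dimensional module $M$, and the torsion Serre subcategory must be compared with the full module category. I expect the colimit reduction to finite dimensions, together with the fact that for a locally noetherian Grothendieck category the Ext out of a noetherian object both commutes with filtered colimits and agrees with the Ext in the full ambient category, to do all the real work; the nilpotency of $L$ enters only through Corollary~\ref{spec cor for nilp}.
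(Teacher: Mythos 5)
Your proposal is correct and follows essentially the same route as the paper: translate via the $\log$ equivalence, reduce to finite-dimensional modules by commuting Ext with filtered colimits, identify Ext in the ambient torsion category with Ext in the finitely generated torsion category, and finish with Corollary~\ref{spec cor for nilp}. The only difference is that you spell out (via locally noetherian Grothendieck category arguments) the step the paper dismisses as ``standard methods,'' which is a welcome clarification rather than a divergence.
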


\begin{proof} The equivalence \eqref{derived log} implies the isomorphism
\begin{equation}\label{cor for ext} \Ext ^\bullet _{V\text{-Mod}} (k,M)=\Ext ^\bullet _{(U(L)\text{-Mod})_I}(k,\log(M))\end{equation}
The module $M$ is a direct limit (union) of its finite dimensional submodules.
The cohomology on both sides of \eqref{equality of cohomology} commutes with direct limits, hence
we may assume that $\dim _kM<\infty$ and so the $\log(M)\in U(L)\modul$.
Using the standard methods one can show that
$$\Ext ^\bullet _{(U(L)\text{-Mod})_I}(k,\log(M))=\Ext ^\bullet _{(U(L)\modul)_I}(k,\log(M))$$
Finally, Corollary \ref{spec cor for nilp} implies the isomorphism
$$\Ext ^\bullet _{(U(L)\modul)_I}(k,\log(M))=\Ext ^\bullet _{U(L)\modul}(k,\log(M))$$
which proves the corollary.
\end{proof}

Let $X$ be a $k$-scheme with an action of the group $V$. For a $V$-equivariant quasi-coherent sheaf $F$, its cohomology can be computed as
$$H^\bullet _V(X,F)=\Ext ^\bullet _{V\text{-Mod}}(k,\bbR\Gamma
(X, F)),$$
and sometimes one wants to know that the Ext-space
$\Ext ^\bullet _{V\text{-Mod}}(k,-)$ can be computed using the standard complex for the Lie algebra $L$ (by Corollary \ref{appl to coh}). This fact was used, for example, in the key computation on p.~8 of \cite{Te}.

\end{document}